% 2/2/2014 Version ready for submission

\documentclass[11pt, a4paper]{article}

\usepackage{amsmath, epsfig, url}
\usepackage{latexsym}
\usepackage{amssymb}
\usepackage{latexsym}
\usepackage{graphicx}

\marginparwidth 0pt \oddsidemargin 0pt \evensidemargin 0pt
\topmargin -1.2 cm \textheight 24.0 truecm \textwidth 16.0 truecm

\newtheorem{theorem}{Theorem}
\newtheorem{lemma}[theorem]{Lemma}
\newtheorem{corollary}[theorem]{Corollary}
\newtheorem{proposition}[theorem]{Proposition}

\newtheorem{problem}{Problem}

\input amssym.def
\input amssym.tex

\long\def\delete#1{}

\def\qed{\hfill$\Box$\vspace{11pt}}

\def\Xi{X}

\def\EE{{\cal E}}
\def\KK{{\cal V}}

\def\De{\Delta}

\def\d{\delta}
\def\g{\gamma}
\def\l{\lambda}

\def\diam{{\rm diam}}

\newcommand{\be}{\begin{equation}}
\newcommand{\ee}{\end{equation}}
\newcommand{\bea}{\begin{eqnarray}}
\newcommand{\eea}{\end{eqnarray}}
\newcommand{\bean}{\begin{eqnarray*}}
\newcommand{\eean}{\end{eqnarray*}}

\title{{\bf Three-arc graphs: characterization and domination}}

\author{Guangjun Xu~and~Sanming Zhou \\ \\
{\small 
Department of Mathematics and Statistics}\\
{\small The University of Melbourne}\\
{\small Parkville, VIC 3010, Australia}\\
{\small E-mail: {\texttt \{gx, smzhou\}@ms.unimelb.edu.au}}}

\date{\today}

\begin{document}

\openup 0.8\jot
\maketitle

\vspace{-0.5cm}
 
\smallskip
\begin{abstract}
 An arc of a graph is an oriented edge and a 3-arc is a 4-tuple
$(v,u,x,y)$ of vertices such that both $(v,u,x)$ and $(u,x,y)$
are paths of length two. 
The 3-arc graph of a graph $G$ is 
defined to have vertices the arcs of $G$ such that two arcs
$uv, xy$ are adjacent if and only if $(v,u,x,y)$ is a
3-arc of $G$. In this paper we give a characterization of 3-arc graphs and 
obtain sharp upper bounds on the domination number of the 3-arc graph of a graph
$G$ in terms that of $G$. 

{\it Key words:}~$3$-arc graph; domination number; graph operation

{\it AMS subject classification (2000):}~05C76, 05C69 
\end{abstract}

\section{Introduction}
%\label{sec:int}

The $3$-arc construction \cite{Li-Praeger-Zhou98} is a relatively new graph operation that has been used in the classification or characterization of several families of arc-transitive graphs \cite{Gardiner-Praeger-Zhou99, MPZ, Li-Praeger-Zhou98, LZ, Zhou00c, Zhou98}. (A graph is arc-transitive if its automorphism group acts transitively on the set of oriented edges.) As noted in \cite{KXZ}, although this operation was first introduced in the context of graph symmetry, it is also of interest for general (not necessarily arc-transitive) graphs, and many problems on this new operation remain unexplored. In this paper we give partial solutions to two problems posed in \cite{KXZ} regarding 3-arc graphs.   

An \emph{arc} of a graph $G$ is an ordered pair of
adjacent vertices. For adjacent vertices $u, v$ of $G$, we use
$uv$ to denote the arc from $u$ to $v$, $vu$ ($\ne uv$) the arc
from $v$ to $u$, and $\{u, v\}$ the edge between $u$ and $v$.
A \emph{$3$-arc} of $G$ is a 4-tuple $(v, u, x, y)$ of vertices, possibly with $v = y$, 
such that both $(v,u,x)$ and $(u,x,y)$ are paths of $G$. 
Let $\De$ be a set of $3$-arcs of $G$. Suppose that $\De$ is {\em self-paired} in the sense that $(y, x, u, v) \in \De$ whenever $(v, u, x, y) \in \De$. Then the {\em $3$-arc graph of $G$ relative to 
$\De$}, denoted by $X(G, \De)$, is defined \cite{Li-Praeger-Zhou98} to be the (undirected) graph whose vertex set is the set of arcs of $G$ such that two vertices corresponding to arcs $uv$ and $xy$ are adjacent if and only if $(v,u,x,y) \in \De$. In the context of graph symmetry, $\De$ is usually a self-paired orbit
on the set of $3$-arcs under the action of an automorphism group of $G$. In the case where $\De$ is the set of all $3$-arcs of $G$, we call $X(G, \De)$ the {\em $3$-arc graph} \cite{KZ} of $G$ and denote it by $X(G)$. 

The first study of $3$-arc graphs of general graphs was conducted by Knor and Zhou in \cite{KZ}. Among other things they proved that if $G$ has vertex-connectivity $\kappa(G) \ge 3$ then its $3$-arc graph has vertex-connectivity $\kappa(X(G)) \ge (\kappa(G)-1)^2$, and if $G$ is connected of minimum degree $\d(G) \ge 3$ then the diameter $\diam(X(G))$ of $X(G)$ is equal to $\diam(G)$, $\diam(G) + 1$ or $\diam(G) + 2$. In \cite{bmg}, Balbuena, Garc{\'{\i}}a-V{\'a}zquez and Montejano improved the bound on the vertex-connectivity by proving $\kappa(X(G)) \ge \min\{\kappa(G)(\d(G)-1), (\d(G)-1)^2\}$ for any connected graph $G$ with $\d(G) \ge 3$. They also proved \cite{bmg} that for such a graph the edge-connectivity of $X(G)$ satisfies $\l(X(G)) \ge (\d(G)-1)^2$, and they further gave a lower bound on the restricted edge-connectivity of $X(G)$ in the case when $G$ is 2-connected. In \cite{KXZ}, Knor, Xu and Zhou studied the independence, domination and chromatic numbers of $3$-arc graphs.

In a recent paper \cite{XZ} we obtained a necessary and sufficient condition \cite[Theorem 1]{XZ} for $X(G)$ to be Hamiltonian. In particular, we proved \cite[Theorem 2]{XZ} that a $3$-arc graph is Hamiltonian if and only if it is connected, and that if $G$ is connected with $\d(G) \ge 3$ then all its iterative $3$-arc graphs $X^i(G)$ are Hamiltonian, $i \ge 1$. (The iterative $3$-arc graphs are recursively defined by $X^1(G) := X(G)$ and $X^{i+1}(G) := X(X^i(G))$ for $i \ge 1$.) As a consequence we obtained \cite[Corollary 2]{XZ} that if a vertex-transitive graph is isomorphic to the $3$-arc graph of a connected arc-transitive graph of degree at least three, then it is Hamiltonian. This provides new support to the well-known Lov\'asz-Thomassen conjecture \cite{Thom1} which asserts that all connected vertex-transitive graphs, with finitely many exceptions, are Hamiltonian. We also proved (as a consequence of a more general result) \cite[Theorem 4]{XZ} that if a graph $G$ with at least four vertices is Hamilton-connected, then so are its iterative $3$-arc graphs $X^i(G)$, $i \ge 1$. 

The $3$-arc construction was generalized to directed graphs in \cite{KXZ}. Given a directed graph $D$, the {\em $3$-arc graph} \cite{KXZ} of $D$, denoted by $X(D)$, is defined to be the undirected graph whose vertex set is the set of arcs of $D$ such that two vertices corresponding to arcs $uv, xy$ of $D$ are adjacent if and only if $v \ne x$, $y \ne u$ and $u, x$ are adjacent in $D$. Recently, we proved with Wood \cite{WXZ} that the well-known Hadwiger's graph colouring conjecture \cite{West} is true for the $3$-arc graph of any directed graph with no loops.   

In spite of the results above, compared with the well-known line graph operation \cite{HB} and the 2-path graph operation \cite{BH, LL}, our knowledge of $3$-arc graphs is quite limited and many problems on them are yet to be explored. For instance, the following problems were posed in \cite{KXZ}: 
 
\begin{problem}
Characterize 3-arc graphs of connected graphs.  
\end{problem}

\begin{problem}
Give a sharp upper bound on $\g(X(G))$ in terms of $\g(G)$ for any connected graph 
$G$ with $\d(G) \ge 2$, where $\g$ denotes the domination number.
\end{problem}

In this paper we give partial solutions to these problems. We first show that there is no forbidden subgraph characterization of $3$-arc graphs (Proposition \ref{le:parti}), and then we provide a descriptive characterization of $3$-arc graphs (Theorem \ref{le:char}). We give a sharp upper bound for 
$\g(X(G))$ in terms of $\g(G)$ (Theorem \ref{th:dom}) for any graph $G$ with $\d(G) \ge 2$, and more upper bounds for $\g(X(G))$ in terms of $\g(G)$ and the maximum degree $\De(G)$ when $2 \le \d(G) \le 4$ (Theorem \ref{th:deg34}). Finally, we prove that if $G$ is claw-free with $\d(G) \ge 2$, then $\g(X(G)) \le 4\g(G)$ and moreover this bound is sharp (Theorem \ref{th:claw}).
   
All graphs in the paper are finite and undirected with no loops or multiple edges. The {\em order} of a graph is the number of vertices in the graph. As usual, the minimum and maximum degrees of a graph $G = (V(G), E(G))$ are denoted by $\delta(G)$ and $\Delta(G)$, respectively. The degree of a vertex $v \in V(G)$ in $G$ is denoted by $\deg(v)$. The {\em neighbourhood} of $v$ in $G$, denoted by $N(v)$, is the set of vertices of $G$ adjacent to $v$, and the {\em closed neighbourhood} of $v$ is defined as $N[v] := N(v)\cup \{v\}$. We say that 
$v$ {\em dominates} every vertex in $N(v)$, or every vertex in $N(v)$ is {\em dominated} by $v$.  
For a subset $S$ of $V(G)$, denote $N(S) := \cup_{v\in S} N(v)$ and $N[S] := N(S)\cup  S$. We may add subscript $G$ to these notations (e.g. $\deg_{G}(v)$) to indicate the underlying graph when there is a risk of confusion. If $N[S]=V(G)$, then $S$ is called a {\em dominating set} of $G$. The {\em domination number} of $G$, denoted by $\gamma(G)$,  is the minimum cardinality of a dominating set of $G$; a dominating set of $G$ with cardinality $\gamma(G)$ is  called a {\em $\gamma(G)$-set} of $G$. The subgraph of $G$ induced by $S$ is denoted by $G[S]$, and the subgraph of $G$ induced by $V(G) - S$ is denoted by $G-S$. 
  
The reader is referred to \cite{West} for undefined notation and terminology.

\section{A characterization of  $3$-arc graphs}
\label{sec:char}

It is well known that line graphs can be characterized by a finite set of forbidden induced subgraphs \cite{HB}. In contrast, a similar characterization does not exist for 3-arcs graphs, as we show in the following result.

\begin{proposition}
\label{le:parti}
There is no characterization of $3$-arc graphs by a finite set of forbidden induced subgraphs. More specifically, any graph is isomorphic to an induced subgraph of some $3$-arc graph. 
\end{proposition}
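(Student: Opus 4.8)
The plan is to prove the stronger embedding statement and then derive the non-existence of a forbidden-subgraph characterization from it. Recall that two arcs $uv$ and $xy$ are adjacent in $X(G)$ precisely when $(v,u,x,y)$ is a $3$-arc of $G$, that is, when $u \sim x$ in $G$, $v \ne x$ and $y \ne u$ (the coincidence $v = y$ being explicitly permitted). The observation that drives the construction is that once all the tails of a chosen set of arcs are distinct from all their heads, the side conditions $v \ne x$ and $y \ne u$ become vacuous, so that the adjacency of such arcs in $X(G)$ is governed solely by the adjacency of their tails in $G$.

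Given an arbitrary graph $H$ with $V(H) = \{v_1, \dots, v_n\}$, I would build $G$ from $H$ by adjoining a single new vertex $w \notin V(H)$ joined to every $v_i$ (so that $G$ is the cone over $H$); in particular $G$ is connected. For each $i$ consider the arc $a_i := v_i w$ of $G$, and define $\phi : V(H) \to V(X(G))$ by $\phi(v_i) = a_i$. Since the $v_i$ are distinct, the $a_i$ are distinct arcs, so $\phi$ is injective.

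It then remains to verify that $\phi$ is an isomorphism from $H$ onto the subgraph of $X(G)$ induced by $\{a_1, \dots, a_n\}$. For $i \ne j$, the arcs $a_i = v_i w$ and $a_j = v_j w$ are adjacent in $X(G)$ if and only if $(w, v_i, v_j, w)$ is a $3$-arc of $G$, i.e. if and only if both $(w, v_i, v_j)$ and $(v_i, v_j, w)$ are paths. Because $w$ is a fresh vertex adjacent to every $v_k$, the edges $\{w, v_i\}$ and $\{v_j, w\}$ are always present and all the distinctness requirements hold automatically; hence both triples are paths exactly when $v_i \sim_G v_j$, which by construction happens exactly when $v_i \sim_H v_j$. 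Thus $\phi$ preserves and reflects adjacency, establishing that $H$ is isomorphic to an induced subgraph of $X(G)$.

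Finally, for the first assertion: any characterization by a family of forbidden induced subgraphs (finite or not) describes a \emph{hereditary} class, that is, one closed under taking induced subgraphs. If the class of $3$-arc graphs were hereditary, then by the embedding above every graph, being an induced subgraph of some $3$-arc graph, would itself be a $3$-arc graph; but $|V(X(G))| = 2|E(G)|$ is always even, so no graph of odd order is a $3$-arc graph, a contradiction. Hence no such characterization exists, in particular no finite one. The only real care needed is in handling the $3$-arc adjacency conditions correctly, especially the permitted coincidence $v = y = w$ and the verification that no spurious edges arise among the $a_i$; the choice of a single fresh apex $w$ is precisely what makes the side conditions vacuous and keeps the induced subgraph equal to $H$.
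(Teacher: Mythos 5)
Your construction is exactly the paper's: cone over $H$ with a fresh apex, map each $v_i$ to the arc $v_i w$, and check that adjacency of these arcs in $X(G)$ reduces to adjacency of their tails in $H$. The proof is correct, and your explicit closing argument (hereditary classes plus the parity of $|V(X(G))| = 2|E(G)|$) just spells out the deduction the paper leaves implicit.
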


\begin{proof}
Let $H$ be any graph. Define $H^*$ to be the graph obtained from $H$ by adding a new 
vertex $x$ and an edge joining $x$ and each vertex of $H$. It is not hard see that $u, v \in V(H)$ are adjacent in $H$ if and only if the arcs $ux, vx$ of $H^*$ are adjacent in $X(H^*)$. Thus the subgraph of $X(H^*)$ induced by $A := \{vx  : v \in V(H)\} \subseteq V(X(H^*))$ is isomorphic to $H$ via the bijection $v \leftrightarrow vx$ between $V(H)$ and $A$. Since $H$ is arbitrary, this means that any graph is  isomorphic to an induced subgraph of some $3$-arc graph, and so the result follows.  
\end{proof} \qed

Next we give a descriptive characterization of 3-arc graphs. To avoid triviality we assume that the graph under consideration has at least one edge.  

\begin{theorem}
\label{le:char}
 A  graph  $G$ having at least one edge is isomorphic to the $3$-arc graph of some graph if and only if $V(G)$ admits a partition $\KK:= \KK_{1} \cup \KK_{2}$ and $E(G)$ admits a partition $\EE$ such that the following hold: 
    
    \begin{itemize}
\item[\rm (a)]
    each element of $\KK_{1}$ contains exactly one vertex of $G$, and each element of $\KK_{2}$ is an independent set of $G$ with at least two vertices;
\item[\rm (b)] 
each $E_{i} \in \EE$ induces a complete bipartite subgraph $B_i$ of $G$ with each part of the bipartition a subset of some $V \in \KK_{2}$ with size $|V|-1$; 
\item[\rm (c)]
if $v\in V \in \KK_{2}$, then $v$ belongs to at most $|V|-1$ complete bipartite graphs described in (b);
\item[\rm (d)]
   if two distinct complete bipartite graphs $B_i$ and  $B_j$ in (b) have parts contained in the same  $V \in \KK_{2}$, then
    $B_i$  and  $B_j$ have exactly  $|V|-2$   common vertices, and all of them are in $V$;
\item[\rm (e)]  
          $2 |\EE| = \sum_{V \in \KK_{2}}|V| - |\KK_{1}|$.
\end{itemize}
\end{theorem}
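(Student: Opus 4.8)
The natural approach is to prove both implications through the one feature that organizes the vertex set of a $3$-arc graph, namely the common tail of arcs; recall that arcs $uv$ and $xy$ are adjacent in $X(H)$ precisely when $u, x$ are adjacent in $H$, $v \ne x$, and $y \ne u$. For necessity, suppose $G = X(H)$ and, for each vertex $u$ of $H$ of positive degree, let $A_u = \{uv : v \in N_H(u)\}$ be the set of arcs with tail $u$. Two arcs with a common tail never form a $3$-arc, so each $A_u$ is independent and the sets $A_u$ partition $V(G)$; I would let $\KK_1$ collect those $A_u$ with $\deg_H(u) = 1$ and $\KK_2$ the remainder, giving (a). For an edge $\{u,x\}$ of $H$ with both ends of degree at least two, the arcs $uv$ $(v \ne x)$ and $xy$ $(y \ne u)$ induce a complete bipartite subgraph with parts $A_u \setminus \{ux\}$ and $A_x \setminus \{xu\}$; since the ``middle edge'' $\{u,x\}$ of an edge of $G$ is recovered from the two tails, the edge sets of these subgraphs partition $E(G)$, which gives (b), and (c)--(d) then follow by counting which subgraphs a single arc or a pair of arcs can lie in.

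Condition (e) is the arithmetic core of the argument and also the hinge between the two directions. Encoding the two parts of each $B_i$ by the unique element missing inside its $\KK_2$-class, the total number of such missing elements is $2|\EE|$, so the number of elements of the $\KK_2$-classes that are missing from no subgraph equals $\sum_{V \in \KK_2}|V| - 2|\EE|$. I would identify these leftover elements with the arcs running from a high-degree vertex to a leaf of $H$ and show that their number equals the number of leaves, that is $|\KK_1|$; this is exactly (e).

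For sufficiency, given a partition satisfying (a)--(e) I would rebuild $H$ by taking one vertex $u_V$ for each class $V \in \KK$. Each $B_i \in \EE$ has, by (b), its two parts in classes $V, V' \in \KK_2$; I join $u_V$ to $u_{V'}$ and let the unique element of $V$ absent from $B_i$'s part name the arc $u_V u_{V'}$, with the symmetric convention at $V'$. By (d) the assignment of a missing element to each subgraph meeting a fixed class is injective, so the arcs at $u_V$ arising from $\EE$ are distinct; the elements of the $\KK_2$-classes left unnamed number $|\KK_1|$ by (e), and I would attach the leaves $u_{\{z\}}$ $(\{z\} \in \KK_1)$ bijectively to these leftover slots, naming the corresponding pendant arcs. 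One then checks $\deg_H(u_V) = |V|$ for every class and verifies $X(H) \cong G$ by tracing adjacencies through the bipartite blocks.

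The main obstacle I anticipate is not the bookkeeping of (a)--(e) but the verification that the reconstructed $H$ is a genuine (simple) graph and that the natural bijection carries $X(H)$ onto $G$ exactly. In particular one must exclude two subgraphs $B_i, B_j$ joining the same pair of classes (which would force a multi-edge) and confirm that every arc receives a single head, and one must treat the degenerate configurations -- singleton classes, classes of size two, and isolated vertices of $G$ -- separately, since these are precisely where the correspondence among leaves, pendant arcs and leftover slots is most fragile. Conditions (c) and (d) are what supply the local injectivity for these points, while (e) supplies the global count that turns the leaf-to-slot assignment into a bijection; fitting these together so that adjacency is preserved in both directions is where the real effort lies.
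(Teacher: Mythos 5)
Your proposal follows essentially the same route as the paper's proof: for necessity you partition $V(G)$ into the tail-classes $A_H(u)$, split them by $\deg_H(u)=1$ versus $\deg_H(u)\ge 2$, take the complete bipartite blocks coming from edges of $H$ with both ends of degree at least two, and verify (e) by matching the elements of $\KK_2$-classes missed by every block with the arcs into leaves; for sufficiency you rebuild $H$ with one vertex per class, one edge per block (named by the ``missing'' element $v_{i,x}$, exactly the paper's device), and pendant edges filling the $|V_x|-b_x$ leftover slots, whose total is $|\KK_1|$ by (e). The points you flag as requiring care (injectivity of the block-to-edge-of-$H$ assignment via (c)--(d), and the leaf/slot bijection) are precisely the ones the paper handles, so no further comparison is needed.
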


\begin{proof}
For a graph $H$ and a vertex $v$ of $H$, denote by $A_H(v)$ the set of arcs of $H$ with tail $v$.

\medskip 
\textsf{Necessity:}~ Suppose that $G$ is isomorphic to $X(H)$ for some graph $H$. We  identify $G$ with $X(H)$. Let 
$\KK_1:=\{A_H(v): \deg_H(v)=1\}$ and $\KK_2:=\{A_H(v): \deg_H(v)\ge2 \}$.
Then each element of  $\KK_{1}\cup  \KK_{2}$ is an independent set of $G$, and each edge
of $G$ occurs only between distinct elements $A_H(u)$, $A_H(v)$ of  $\KK_{2}$ with
$u, v$ adjacent in $H$. For each pair of adjacent vertices $u,v$ of $H$ with $\deg_H(u) \ge 2$ and $\deg_H(v)\ge 2$, the set $E_{\{u,v\}}$ of edges of $G$ between $A_H(u)$  and  $A_H(v)$ induces a complete bipartite subgraph of $G$. Denote by $\EE$ the family 
of such $E_{\{u,v\}}$. It is straightforward to verify that $\KK := \KK_1 \cup \KK_2$ is a partition of $V(G)$ and $\EE$ is a partition of $E(G)$ such that (a)-(e) are satisfied.

\medskip
\textsf{Sufficiency:}~ Suppose that $V(G)$ admits a partition $\KK:= \KK_{1} \cup \KK_{2}$ and $E(G)$ admits a partition $\EE$ satisfying (a)-(e). We construct a graph $H$ such that $X(H)$ is isomorphic to $G$.

We construct for each $V_x \in \KK$ a vertex $x$ of $H$. We say that $x$ represents $V_x$, and that $x$ is of type $\KK_1$ or $\KK_2$ according to whether $V_x$ belongs to $\KK_1$ or $\KK_2$. For each bipartite graph $B_i$ as in (b), there are distinct elements $V_x, V_y \in \KK_{2}$ and vertices $v_{i,x} \in V_x, v_{i,y} \in V_y$ such that $\{V_x - \{v_{i,x}\}, V_y - \{v_{i,y}\}\}$ is the bipartition of $B_i$. The pair $\{x, y\}$ is determined uniquely by $B_i$, and vice versa (so we may write $i=i(x,y)$), and we add the edge $\{x, y\}$ to $H$. For each $V_x \in \KK_{2}$, denote by $b_x$ the number of complete bipartite graphs $B_i$ as in (b) that contain at least one vertex of $V_x$. By (b), one part of the bipartition of each of such graphs $B_i$ must be a subset of $V_x$ with size $|V_x| - 1$. On the other hand, by (c) each $v\in V_x$ belongs to at most $|V_x|-1$  such complete  bipartite graphs $B_i$. By counting the number of ordered pairs $(v, B_i)$ with $v\in V_x \cap V(B_i)$, we obtain $b_x (|V_x|-1) \leq|V_x|(|V_x|-1)$, yielding $b_x \leq |V_x|$. We then add $|V_x|-b_x$ edges to $H$ joining $x$ to $|V_x|-b_x$ vertices of type $\KK_{1}$,  in such a way that no vertex of type $\KK_{1}$ is repeatedly used. Thus each vertex $x$ of $H$ of type $\KK_{2}$  has degree $|V_x|$ in $H$.  Note that the sum $\cup_{V_x \in \KK_{2}}|V_x|$ counts each edge  between two vertices of type $\KK_{2}$ twice, and 
each edge with one end-vertex of type $\KK_1$ once. The total number of vertices of type $\KK_1$ required is $\cup_{V_x \in \KK_{2}}|V_x|-2|\EE|$, which agrees with $|\KK_{1}|$ by (e). This completes the construction of $H$.
  
We now prove that $X(H)$ is isomorphic to $G$. By the construction above, each vertex $x$ of $H$ has degree $|V_x|$ in $H$, and the set $A_{H}(x)$ of arcs of $H$ outgoing from $x$ is an independent set of $X(H)$ with size $|V_x|$. Obviously, such independent sets $A_{H}(x)$ of $X(H)$ are in one-to-one correspondence with the elements $V_x$ of $\KK$. Note that $\{A_{H}(x): x \in V(H)\}$ is a partition of the vertex set $A(H)$ of $X(H)$ which corresponds to the partition $\KK = \{V_x: x \in V(H)\}$ of the vertex set of $G$.  
  
Let $\{x,y\}$ be an edge of  $H$  with $\deg_H(x)\ge2$ and $\deg_H(y)\geq2$. Then $A_H(x)\cup A_H(y)$ induces a complete bipartite subgraph $B(x,y)$ of $X(H)$ with bipartition $\{A_H(x)-\{xy\}, A_H(y)-\{yx\}\}$. On the other hand, by the definition of $H$, the edges of $G$ between $V_x - \{v_{i,x}\}$ and $V_y - \{v_{i,y}\}$ induce a complete bipartite subgraph of $G$ that is equal to $B_{i}$  as in (b) with $i = i(x, y)$. It can be verified that $\{x, y\} \mapsto B_{i(x,y)}$ defines a bijection from the set of edges $\{x,y\}$ of $H$ with $\deg_H(x)\ge2$ and $\deg_H(y)\geq2$ to the set of complete bipartite graphs as in (b).    
  
For a fixed $V_x \in \KK_{2}$, the corresponding vertex $x$ has degree at least $2$ in $H$. For each neighbour $y$ of $x$ in $H$ with $V_y \in \KK_{2}$, the complete bipartite graph $B_{i}$ with $i = i(x,y)$ has bipartition $\{V_x - \{v_{i,x}\}, V_y - \{v_{i,y}\}\}$. By the construction of $H$, one can verify that the mapping $xy \mapsto v_{i,x}$ (where $i = i(x,y)$) is a bijection from $\{xy: y \in N_H(x), \deg_H(y)\geq2\}$ (which is a subset of $A_H(x)$) to $V_x$. Let $L := \{y \in N_H(x): \deg_H(y) = 1\}$ be the set of  leaf-neighbours of  $x$, and $W_x$ be the  set of vertices $w \in V_x$ such that there exists no $B_i$ as in (b) with $V(B_i) \cap V_x = V_x-\{w\}$. Then  $|L|=|W_x|$ and so we can choose a bijection (in an arbitrary manner) between  $L$  and $W_x$. Finally, we choose an arbitrary bijection between the set of arcs of $H$ starting from leaves and $\KK_{1}$. Then we have defined a bijection between the vertices of $X(H)$ and the vertices of $G$. From the way this bijection is defined it is straightforward to verify that it is an isomorphism between $X(H)$ and $G$.   
\qed
\end{proof}

\section{Domination number of $3$-arc graphs}

% begin delete
\delete
{
% This paragraph is redundant since the same was said in the introduction.
In this section we prove an upper bound on $\g(X(G))$ in terms of $\g(G)$ for any graph $G$ with $\d(G) \ge 2$ (Theorem \ref{th:dom}), three upper bounds on $\g(X(G))$ in terms of $\g(G)$ and $\De(G)$ when $G$ has $\d(G)=2,3,4$ respectively (Theorem \ref{th:deg34}), and $\g(X(G)) \le 4\g(G)$ for claw-free graphs $G$ with $\d(G) \ge 2$ (Theorem \ref{th:claw}).
}
% end delete

Given a graph $G$, denote by $G \circ K_1$ the graph obtained from $G$ by adding for each $x \in V(G)$ a new vertex $x'$ and a new edge joining $x$ and $x'$. Define $\cal{A}$ to be the family of graphs depicted in Figure \ref{fig:a}. 

\vspace{0.5cm} 
  
\begin{figure}[htb]
\begin{center}
\includegraphics[width=9cm]{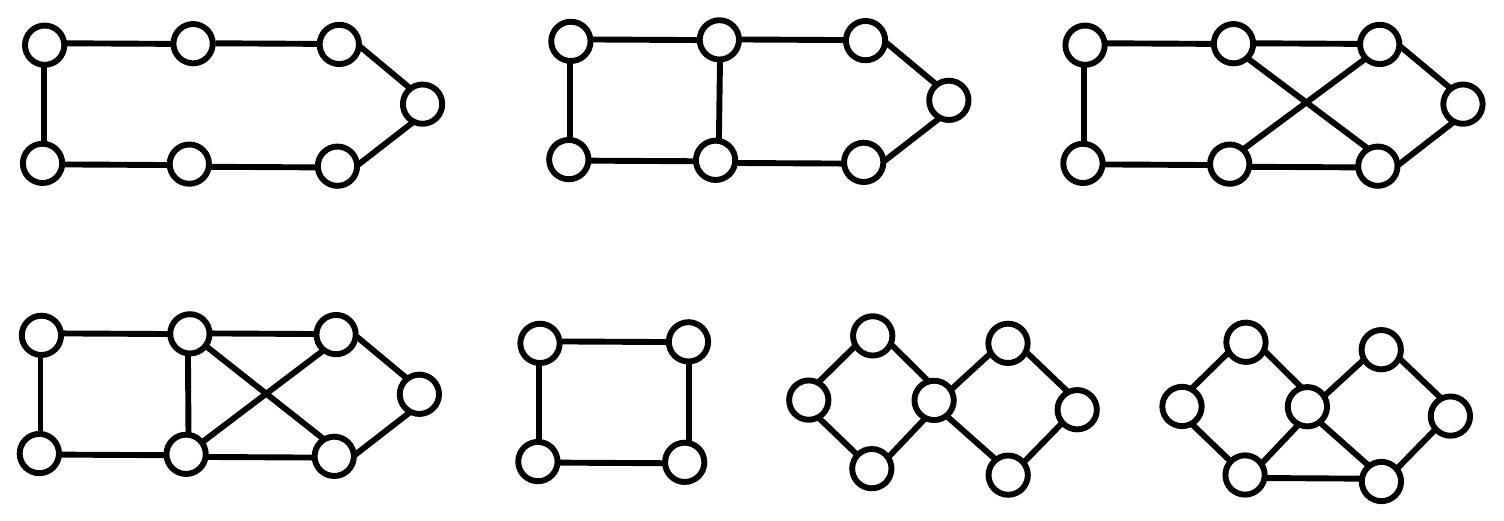}
\caption{Graphs in family $\cal{A}$.}
\label{fig:a}
\end{center}
\end{figure}

\vspace{-0.5cm}
 
 \begin{lemma}
 \label{le0}
 Let $G$ be a connected graph of order $n$.   
 \begin{itemize} 
\item[\rm (a)] If $\delta(G)\geq 1$, then $\gamma(G)\leq  \displaystyle{\frac{n}{2}}$ {\em(\cite[p.206]{O})}, and $\gamma(G)= \displaystyle{\frac{n}{2}}$  if and only if $G\cong C_4$ or $H  \circ K_1$ for some graph $H$   {\em(\cite{FJK,PX})};  
\item[\rm (b)]  
  if $\delta(G)\geq 2$  and $G\notin \cal{A}$, then  $\gamma (G)\le   \displaystyle{\frac{2n}{5}}$ {\em(\cite{MS})};
\item[\rm (c)]  
  if $\delta(G)\geq 3$, then $\gamma (G)\le   \displaystyle{\frac{3n}{8}}$ {\em(\cite{RE})}.
\end{itemize}
 \end{lemma}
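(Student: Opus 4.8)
The plan is to read Lemma~\ref{le0} as a collection of classical extremal bounds and to prove each part by the method matched to its minimum-degree hypothesis; the unifying heuristic is that raising $\delta(G)$ forces a sparser dominating set, and each sharper constant demands a correspondingly more delicate counting argument. I would treat the three density regimes $\delta\ge 1$, $\delta\ge 2$, $\delta\ge 3$ separately, since no single trick delivers all three constants.

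For the bound $\gamma(G)\le n/2$ in (a) I would use Ore's complementation argument. Take a minimal dominating set $D$ of $G$ and show that $V(G)-D$ is also dominating. Fix $v\in D$: if $v$ has a neighbour in $V(G)-D$ it is dominated by $V(G)-D$; otherwise $N(v)\subseteq D$, and minimality of $D$ produces a vertex $p$ dominated only by $v$. Because $v$ is not isolated (this is where $\delta\ge 1$ enters) one checks that $p\ne v$, so $p\in V(G)-D$ is a neighbour of $v$ and dominates it. Hence $D$ and $V(G)-D$ are both dominating sets whose sizes add to $n$, and the smaller one certifies $\gamma(G)\le n/2$.

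The equality clause of (a) I would obtain by analysing when the above inequality is tight: $\gamma(G)=n/2$ forces $|D|=|V(G)-D|=n/2$ with both halves minimum dominating sets, which rigidly constrains the neighbourhood of every vertex (each vertex owns essentially one private neighbour across the cut). Pushing these constraints through a short case analysis isolates exactly $C_4$ and the coronas $H\circ K_1$, recovering the characterisation of \cite{FJK,PX}.

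The main obstacle is parts (b) and (c), where the constants $2/5$ and $3/8$ lie well below what complementation can reach. For (b) I would follow McCuaig and Shepherd \cite{MS}, building a dominating set by a structural reduction while amortising its cost against the already-dominated vertices; the hypothesis $\delta\ge 2$ supplies enough neighbours to hold the average cost at $2/5$ per vertex, and the finitely many configurations where this accounting breaks down are precisely the graphs of ${\cal A}$, which must be excluded by hand. For (c) I would invoke Reed's argument \cite{RE}, which for $\delta\ge 3$ constructs a dominating set greedily and bounds its size by $3n/8$ through a careful analysis of the still-undominated subgraph. Reproducing these discharging-type arguments in full is exactly where the real work lies; for the purposes of this paper they are quoted from the literature rather than reproved.
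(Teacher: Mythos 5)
The paper does not actually prove Lemma~\ref{le0}: all three parts are quoted directly from the cited literature (Ore for the inequality in (a), Fink et al.\ and Payan--Xuong for the equality case, McCuaig--Shepherd for (b), Reed for (c)), and your proposal ultimately takes the same route, so the approaches coincide. Your sketch of Ore's complementation argument for $\gamma(G)\le n/2$ is correct as far as it goes, but note that the equality characterisation in (a) and the $2n/5$ and $3n/8$ bounds are substantial theorems in their own right, and like the paper you are in the end relying on the citations rather than reproving them.
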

 
Given a graph $G$, define
\be
\label{eq:vi}
V_i (G) := \{x \in V(G): \deg(x) \ge i\},\;\, i \ge 0.
\ee
For a fixed subset $U$ of $V(G)$, a subset $D\subseteq V(G)$  
  is called  a  {\em $(G:U)$-dominating set}     if   $U \subseteq N[D]$. The {\em $(G:U)$-domination number},  denoted by   $\gamma(G:U)$,  is the minimum
  cardinality of a $(G:U)$-dominating set. 
   Note that   a $(G:U)$-dominating set needs not
    be a subset of $U$,  and 
   a $(G: V(G))$-dominating set is precisely  an ordinary  dominating set of  $G$. 
  
 \begin{lemma}
 \label{le1}
Let $G$ be a graph with order  $n$.  Then 
 \begin{itemize} 
\item[\rm (a)]   
    $\gamma (G:V_1(G))\le   \displaystyle{\frac{n}{2}}$, and  equality holds if and only if each component of $G$ is isomorphic to  $C_4$ or $H \circ K_1$ for some connected graph $H$ (which relies on the component);
\item[\rm (b)]
 $\gamma (G:V_2(G))\le   \displaystyle{\frac{2n}{5}}$ if each component of 
    $G$ is not isomorphic to a graph in the family $\cal{A}$;
\item[\rm (c)]  
   $\gamma (G:V_3(G))\le   \displaystyle{\frac{3(n+2)}{8}}$.
\end{itemize}
 \end{lemma}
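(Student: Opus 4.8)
The three parts share one strategy: reduce the relative domination number $\gamma(G:V_i(G))$ to the ordinary domination number of an auxiliary graph of minimum degree at least $i$, and then invoke the matching part of Lemma \ref{le0}. The reduction rests on one elementary observation. Suppose $H$ is obtained from $G$ by adding some vertices and/or edges so that no added vertex and no added edge is incident with any vertex of $V_i(G)$; then $N_H[v]=N_G[v]\subseteq V(G)$ for every $v\in V_i(G)$, so for any dominating set $D$ of $H$ the set $D':=D\cap V(G)$ satisfies $V_i(G)\subseteq N_G[D']$. Hence $D'$ is a $(G:V_i(G))$-dominating set and $\gamma(G:V_i(G))\le |D'|\le \gamma(H)$. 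In each part I would first delete the isolated vertices of $G$, which belong to no $V_i(G)$ and dominate nothing; this leaves $\gamma(G:V_i(G))$ and each $V_i(G)$ unchanged while not increasing the order, so I may assume $\delta(G)\ge 1$.

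For (a), after this deletion $V_1(G)=V(G)$, so $\gamma(G:V_1(G))=\gamma(G)$ (the observation applies with $H=G$). Applying Lemma \ref{le0}(a) to each component $C$ and summing gives $\gamma(G)=\sum_C\gamma(C)\le\sum_C |C|/2=n/2$. Equality forces no isolated vertices to have been deleted and $\gamma(C)=|C|/2$ for every component $C$, i.e.\ each $C$ is $C_4$ or $H_C\circ K_1$ for a connected graph $H_C$; conversely this structure gives equality, which is exactly the stated characterisation.

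For (c), if $\delta(G)\ge 3$ then Lemma \ref{le0}(c) applied componentwise gives $\gamma(G:V_3(G))\le\gamma(G)\le 3n/8\le 3(n+2)/8$. Otherwise I would form $H$ by adding two new adjacent vertices $a,b$ and joining both of them to every vertex of degree at most $2$. Every original vertex then acquires degree at least $3$, and $a,b$ have degree at least $3$ provided there are at least two deficient vertices (the remaining small configurations being checked directly), so every component of $H$ has minimum degree at least $3$; moreover no addition touches a vertex of $V_3(G)$. Since $|V(H)|\le n+2$, the observation together with Lemma \ref{le0}(c) (applied to each component of $H$) yields $\gamma(G:V_3(G))\le\gamma(H)\le 3(n+2)/8$, the two extra vertices accounting exactly for the additive constant.

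Part (b) is the delicate one, because the bound $2n/5$ allows no additive slack and hence no vertices may be added. Here I would raise the degree of every leaf to $2$ without enlarging the vertex set, by pairing the leaves and inserting one edge inside each pair. Since both endpoints of such an edge are leaves, they lie outside $V_2(G)$, so the observation still applies; and $V(H)=V(G)$ has order at most $n$, whence Lemma \ref{le0}(b) gives $\gamma(G:V_2(G))\le\gamma(H)\le 2|V(H)|/5\le 2n/5$. I expect the real work, and the main obstacle, to lie in carrying out this edge insertion correctly: I must arrange $\delta(H)\ge 2$ even when the number of leaves is odd or very small (so that no unpaired leaf survives), and, most importantly, I must guarantee that no component of $H$ is isomorphic to a member of the exceptional family ${\cal A}$, given only that no component of $G$ is. Because ${\cal A}$ consists of finitely many small graphs, this should reduce to a finite but somewhat tedious case analysis, treating small components separately from large ones.
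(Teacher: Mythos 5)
Your overall strategy---augment $G$ to a graph $H$ of minimum degree at least $i$ without touching $V_i(G)$, then apply Lemma \ref{le0} componentwise---is the same reduction the paper uses, and parts (a) and (c) essentially go through. For (c) the paper instead strings $W:=N(V_3(G))-V_3(G)$ into a cycle (adding two local vertices when $|W|\le 2$); your two ``universal'' vertices $a,b$ are a legitimate alternative, but be aware that when $G$ has exactly one deficient vertex you cannot give $a$ and $b$ degree $3$ without attaching them to a vertex of $V_3(G)$, which breaks the hypothesis of your key observation. The repair (also needed in the paper's $l=1,2$ cases) is to show that any $\gamma(H)$-set containing $a$ or $b$ can be exchanged for one avoiding them without increasing its size; that is a genuine extra step, not merely a ``direct check.''

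The real gap is in (b). The condition you set yourself---that no component of $H$ lands in ${\cal A}$---is simply not achievable, so no choice of pairing will close the argument. Take $G=P_4$ (or $P_7$): no component is in ${\cal A}$, there are exactly two leaves, the only edge you are allowed to add joins them, and the result is $C_4$ (resp.\ $C_7$), a member of ${\cal A}$; Lemma \ref{le0}(b) then yields nothing, and indeed $\gamma(C_4)=2>8/5$. The missing idea, which is how the paper proceeds, is to stop trying to avoid ${\cal A}$ and instead exploit \emph{relative} domination on the bad components: if a component $R$ of $H$ is isomorphic to a member of ${\cal A}$, then at least one of its edges was added, so at least one of its vertices is a former leaf of $G$ and hence lies outside $V_2(G)$; one then verifies directly that dominating only $V_2(G)\cap V(R)$ costs at most $1<2\cdot 4/5$ when $R\cong C_4$ and at most $2<2\cdot 7/5$ when $|V(R)|=7$. (The paper also first restricts to $G[V_2(G)\cup W]$ and links the vertices of $W$ into a path rather than a perfect matching, which disposes of your odd-count and single-leaf worries.) Without this switch from ``avoid ${\cal A}$'' to ``handle ${\cal A}$-components via $\gamma(R:T)$,'' part (b) does not close.
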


 \begin{proof}  Let $G'$ be the graph obtained from $G$ by deleting all isolated vertices.
 Then $|V(G')| \leq n$ and    $\gamma (G:V_i(G))\le  \gamma(G')$ for each $i\geq1$.
  
  (a) By  Lemma \ref{le0}(a),    $\gamma (G:V_1(G))\le  \gamma(G') \leq |V(G')|/2 \le n/2$.
Thus $\gamma (G:V_1(G))  =  n/2$ if and only if all equalities throughout this inequality chain
    hold. The  first equality in the chain holds if and only if  $G$ contains no isolated vertex, and 
  the second equality holds if and only if  (see Lemma \ref{le0}(a)) each component of 
    $G'$ is isomorphic to $C_4$ or $H \circ K_1$ for some connected graph $H$.  
 Hence the result in (a) follows.

(b) Denote $G_2:= G[V_{2}(G)]$ and $W:= N(V_{2}(G)) - V_{2}(G)$. 
Then $W$ contains all vertices of $G$ outside $V_{2}(G)$ that have exactly one
neighbour in $V_{2}(G)$. Thus $G[W]$ consists of isolated vertices, say, 
$x_1,x_2,\ldots, x_l$, where $l\geq 0$. Denote by $x_i'$ the unique neighbour of $x_i\in W$  in $V_{2}(G)$. Note that it may happen that $x_i'=x_j'$ for distinct  $x_i,  x_j \in W$.

 Let $G_2 := G[V_{2}(G) \cup W]$. 
Then  each vertex  in $V_{2}(G)$  has degree at least 2 in  $G_2$.  Define a new graph   $J$ as follows.
   If $l=0$, set  $J:  =G_2$;  
     if $l\geq 2$,  let  $J$ be the graph obtained 
 from   $G_2$ by adding all possible edges to  $G[W]$ until it becomes a path  of length $l-1$;
 if  $l=1$,  let  $J$ be the graph obtained 
 from   $G_2$ by    joining $x_{1}$ to  a neighbour of $x_1'$ other than $x_1$  in $G_2$
 (such a neighbour exists because $\deg_{G_2}(x_1')\geq 2$).

 It is easy to see that $J$ has minimum degree 2.  Let $R_1,R_2,\ldots, R_s$ be the components of
   $J$.  Let $r_j$ be the order of $R_j$, and let $T_j:= V_2(R_j) \cap V_{2}(G)$, 
		for $1\le j\le s$.  Then  $\gamma (R_j: T_j) \leq  	\gamma (R_j: V_2(R_j)) =  \gamma (R_j)$.
		We claim that 
		$\gamma (R_j: T_j) \leq 2r_j/5$. In fact,  if $R_j \notin \cal{A}$, then  by  Lemma \ref{le0}(b)    we have $\gamma (R_j: T_j) \leq  \gamma (R_j) \leq 2r_j/5$.
		Suppose that $R_j \in \cal{A}$. Since no component of $G$ is isomorphic to a graph in  $\cal{A}$, by examining each graph in $\cal{A}$, we see that at least  one edge joining two degree-two vertices  in  $R_j$ is not in $G$
		(such an edge was introduced in the construction of $J$). Thus at least one vertex of  $R_j$ is not in $T_j$. 
   It is readily   seen that  $\gamma (R_j: T_j) \le 1 < 8/5 = 2r_j/5$ if  $R_j\cong C_4 \in \cal{A}$,
	and  $\gamma (R_j: T_j) \le 2 < 14/5 = 2r_j/5$ otherwise. 
Since the union of all $\gamma (R_j: T_j)$-sets is a 
 $(G_2: V_{2}(G))$-dominating set, it follows that  
  $\gamma (G:V_2(G))\le 2|V(G_2)|/5 \leq 2n/5$.

	\delete{
	If    $l= 1$, without loss of generality assume that $x_1$ is in    $R_1$. Denote by  $R_1'$   the component 
    containing $x_1$  in  $G_2$.  Then  $R_1'$   is exactly the graph obtained from $R_1$ by removing the edge
     $\{x_{1},  z\}$.  Clearly  $\gamma (R_j) \leq  \displaystyle{\frac{2r_j}{5}}$ for all $j\ne 1$.
      We claim that   $\gamma(R_1': V_2(R_1')) \leq \gamma (R_1)$.
  Let $T$ be a $\gamma (R_1)$-set. If $x_1\notin T$, it is clear that $T$ is a $(R_1': V_2(R_1'))$-dominating set
  by noting that $x_1$ is of degree 1 in $R_1'$;   if $x_1\in T$, then  $(T-\{x_1\})  \cup \{x_1'\}$ is  a $(R_1': V_2(R_1'))$-dominating set.   Thus,  $\gamma(R_1': V_2(R_1')) \leq \gamma (R_1)$.  Similarly, we have  $\gamma (G:V_2(G))\le  \displaystyle{\frac{2|V(G_2)|}{5}} \leq \displaystyle{\frac{2n}{5}}$. 
 }

    (c)     Denote $G_3:=  G[V_{3}(G)]$ and $W := N(V_{3}(G)) - V_{3}(G)$. Since each vertex of $W$ has at least one neighbour in $V_3(G)$, if a vertex of $W$ has degree two in $G[W]$, then it must be in $V_{3}(G)$ and so cannot be in $W$, a contradiction. Therefore, $\De(G[W]) = 1$ and $G[W]$ consists of isolated vertices and independent edges. Denote $W = \{x_1,x_2,\ldots, x_l\}$,  where $l\geq 0$, and for each $i$ choose $x_i'$ to be any (but fixed) neighbour of $x_i$ in $V_{3}(G)$.

 Let $G_3 := G[V_{3}(G) \cup W]$.  
 Define a new graph   $J$ as follows:
   If $l=0$, set  $J:  =G_3$;  
     if $l\geq 3$,  let  $J$ be the graph obtained 
 from   $G_3$ by adding all possible edges to  $G[W]$ until it becomes a cycle of length $l$;
 if  $l=1$,  let  $J$ be the graph obtained 
 from   $G_3$ by adding  two new vertices   $u_1,u_2$ together with   edges   
    $\{u_{1},  u_2\}$,   $\{x_{1},  u_1\}$,   $\{x_{1},  u_2\}$,    $\{u_1,  x_1'  \}$  and  $\{u_2,  x_1'\}$;  if  $l=2$,  let  $J$ be the graph obtained 
 from   $G_3$ by adding  two new vertices   $u_1,u_2$ together with all  possible edges 
 among $u_{1},  u_2,  x_{1}$ and $x_2$.
 
Note that $\d(J) = 3$. By Lemma \ref{le0}(c), we have  $\gamma (J)\leq  3|V(J)|/8$. Let $D$ be a $\gamma(J)$-set. If $D\cap \{u_1,u_2\}=\emptyset$, then $D$ is a dominating set of  $G_3$ and so $\gamma(G_3)\leq \gamma(J)$.
 If $D\cap \{u_1,u_2\}\ne \emptyset$,   then by the minimality  of $D$ we have  
 $D\cap \{x_1,  x_2\}=\emptyset$  and 
$D$ contains exactly one of  $u_1$ and $u_2$. Thus 
 $(D-\{u_1,u_2\}) \cup \{x_1\}$ is a dominating set of  $G_3$,   and  again we have  $\gamma(G_3)\leq \gamma(J)$. Since any dominating set of  $G_3$ is also a  $(G_3: V_{3}(G))$-dominating set, we obtain that $\gamma (G:V_3(G)) \le \g(G_3) \le  3|V(J)|/8 \leq 3(|V(G_3)|+2)/8  \leq 3(n+2)/8$.  
 \qed
 \end{proof}

\begin{theorem}
\label{th:dom}
Let $G$ be a graph with minimum degree $\delta := \d(G) \geq2$. Then 
\be
\label{eq:sq}
   \gamma (X(G))\leq 3\gamma(G) + \min_{S\in {\cal Q}}\ \{\gamma (G_S: V_{\delta-1}(G_S))\}-1,
\ee
   where $G_S=G - S$ and ${\cal Q}$ is the set of $\gamma(G)$-sets of $G$. Moreover, this bound is attainable.
\end{theorem}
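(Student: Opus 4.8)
The plan is to fix a $\gamma(G)$-set $S$, build an explicit dominating set ${\cal D}$ of $X(G)$ of size at most $3|S| + \gamma(G_S : V_{\delta-1}(G_S)) - 1$, and then minimise over $S \in {\cal Q}$. Throughout I will use the adjacency rule of $X(G)$: the arc $uv$ is dominated by the arc $xy$ precisely when $x \in N(u)$, $x \neq v$ and $y \neq u$. The single most useful observation is a ``two outgoing arcs'' claim: if $w$ has two distinct neighbours $a, b$, then the two arcs $wa, wb$ together dominate every arc $uv$ with $u \in N(w)$ and $v \neq w$ (for each such $u$ at least one of $a, b$ differs from $u$, supplying a legal head $y$). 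I would isolate this as a preliminary claim since it is used repeatedly.

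First I would treat the vertices of $S$. For each $s \in S$, since $\deg_G(s) \geq \delta \geq 2$ I pick two neighbours $a_s, b_s$ and put into ${\cal D}$ the two outgoing arcs $sa_s, sb_s$ together with one ``helper'' arc $a_s c_s$, where $c_s \in N(a_s) \setminus \{s\}$ (which exists as $\delta \geq 2$). By the two-arc claim the arcs $sa_s, sb_s$ dominate all arcs $uv$ with $u \in N(s)$ and $v \neq s$, while the helper arc $a_s c_s$ dominates every outgoing arc $sv$ with $v \neq a_s$ (and $sa_s \in {\cal D}$ already handles $v = a_s$). Running over all $s$, a short case analysis shows that the only arcs that can still be undominated are the arcs $uv$ with $u \notin S$ and $N(u) \cap S = \{v\}$, i.e.\ arcs pointing from a non-$S$ vertex into its unique neighbour in $S$. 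This is the key reduction, and it uses only $3|S|$ arcs.

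The crucial point is that each such remaining tail $u$ satisfies $\deg_{G_S}(u) = \deg_G(u) - 1 \geq \delta - 1$, so it lies in $V_{\delta-1}(G_S)$, and there is exactly one undominated arc per such $u$ (the arc to its unique $S$-neighbour). This is precisely why $\gamma(G_S : V_{\delta-1}(G_S))$ enters. I would take a minimum $(G_S : V_{\delta-1}(G_S))$-dominating set $D$ and, for each $d \in D$, add a single arc out of $d$ whose head is chosen to lie in $S$ whenever $d$ has a neighbour in $S$. Because the head then lies outside $V(G_S)$, such an arc simultaneously dominates every remaining arc whose tail is a $G_S$-neighbour of $d$ and, when $d$ itself is one of the remaining tails, is the remaining arc at $d$; since $D$ dominates all of $V_{\delta-1}(G_S)$, every remaining arc is covered by at most $|D|$ further arcs.

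Finally I would extract the additive $-1$ by a global accounting argument, arranging that for a suitably chosen $s \in S$ one of its three arcs coincides with, or is rendered redundant by, an arc already forced by $D$, and then minimise over $S \in {\cal Q}$. For attainability I would exhibit an explicit family of graphs and verify equality by computing $\gamma(X(G))$ directly, the matching lower bound coming from a covering argument on the arcs. I expect the main obstacles to be two-fold: handling the degenerate subcases in the last domination step --- a vertex $d \in D$ with no neighbour in $S$, or of low degree in $G_S$, whose single arc might miss exactly one remaining tail --- which may require a local modification of $D$ or of the chosen heads; and pinning down the exact constant, i.e.\ justifying the saving that produces the $-1$ and then proving that the resulting bound is genuinely attained.
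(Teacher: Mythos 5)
Your construction of the first $3\gamma(G)+\gamma(G_S:V_{\delta-1}(G_S))$ arcs is essentially identical to the paper's: three arcs per vertex of $S$ (two outgoing plus one helper out of a chosen neighbour), the observation that the only surviving undominated arcs are those from a vertex $u\notin S$ to its unique neighbour in $S$, the inclusion of the set of such tails in $V_{\delta-1}(G_S)$, and one arc back into $S$ from each vertex of a $(G_S:V_{\delta-1}(G_S))$-dominating set. That part is correct (and your worry about a $d\in D$ with no neighbour in $S$ is vacuous, since $S$ dominates $G$ and $D\subseteq V(G)-S$).

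The genuine gap is the additive $-1$, which is part of the theorem and which you only gesture at (``a global accounting argument, arranging that \dots one of its three arcs coincides with, or is rendered redundant by, an arc already forced by $D$''). This saving cannot be arranged by a single uniform trick: the most natural coincidence, making a helper arc $a_sc_s$ equal to some $ds'$ with $d\in D$, $s'\in S$, fails for instance when the relevant neighbour lies in $W$ (its unique $S$-neighbour is $s$ itself, so $c_s=s$ is forbidden), and $D$ may even be empty while the $3\gamma(G)$ term still must shed a unit. The paper's proof of this step (its Claim 2) is a four-way case analysis --- $|S|=1$; $S$ not independent; $S$ independent with $U\neq\emptyset$; $S$ independent with $U=\emptyset$ --- each case rebuilding one or two of the sets $A(x)$ so that two of the chosen arcs merge, followed by a re-verification that domination is preserved; in the last case the replacement $A(x)=\{xz,zv,vu\}$ routes through a vertex $z\in W-D$ and its dominator $v\in D$ so that $vu$ is shared with $A(D)$. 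None of this is present in your sketch. Likewise, attainability is asserted but no witness is given (the paper uses $C_3$ and, more generally, friendship graphs). Without these two pieces the statement as claimed --- with the $-1$ and the sharpness --- is not proved.
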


\begin{proof}
  Let $S$ be a $\gamma(G)$-set of $G$ and denote $H := G_S$ for simplicity. As in (\ref{eq:vi}), let $V_{j}(H)$ denote the set of vertices of 
  $V(H)$  that have degree at least $j$ in $H$. With each $x\in S$ we associate a set 
\be
\label{eq:ax}
A(x) := \{xx_1, xx_2, x_2x_3\}
\ee
 of three arcs of $G$, where $x_1, x_2$  are   distinct neighbours of $x$, and $x_3$ is a neighbour of $x_2$ other than $x$. Since $\delta \geq2$, such a set $A(x)$ exists for every $x\in S$. (In general, many sets $A(x)$ may be obtained this way. We choose one of them arbitrarily and fix it.) Define
\be
\label{eq:as}
      A(S) := \cup_{x\in S} A(x)
\ee
so that $|A(S)| = 3|S| = 3\gamma(G)$. Set
\be
\label{eq:W}
W := \{w\in V(G)-S: |N(w)\cap S|=1\}
\ee
$$
U:=V(G)-(S \cup W).
$$ 
Then  $S$, $W$ and $U$ form a partition of $V(G)$.  
   % For $w\in W$, let $u$ be 
   % the unique neighbour of $w$ in $S$, $w$ is called a  {\em private neighbour}  of $u$ with respect to $S$. 
   Since  each $w\in W$ has exactly one neighbour in $S$, it has at least $\delta-1$ neighbours in $H$. Therefore,  
    \be
    \label{eq:WV}
    W\subseteq V_{\delta-1}(H).
    \ee
Thus every $(H: V_{\delta-1}(H))$-dominating set is also an $(H: W)$-dominating set, and therefore
\be
\label{eq:WV1}
\gamma (H: W) \le \gamma (H: V_{\delta-1}(H)).
\ee

     Let $D$ be a minimum $(H: W)$-dominating set in $H$. 
      With each vertex $y \in D$  we choose an arc $yy'$  such that $y'\in N(y) \cap S$.    Such a vertex $y'$ exists  for every $y \in D$  because $y$ is dominated by a vertex in $S$. Define 
    \be
    \label{eq:D}
    A(D):=\{yy': y\in D\}.
    \ee 
   Then $|A(D)|=|D|=  \gamma (H: W)$, and so $|A(D)| \le \gamma (H: V_{\delta-1}(H))$ by (\ref{eq:WV1}).  

\medskip    
\textsf{Claim 1:} $A(S)\cup A(D)$ is a dominating set of $X(G)$. Hence
\be
\label{eq:chain}
\gamma (X(G)) \le |A(S)\cup A(D)| \le |A(S)| + |A(D)| \leq  3 \gamma(G)+ \gamma (H: V_{\delta-1}(H)).
\ee
 
\textit{Proof of Claim 1:} 
It suffices to show that 
      each arc with tail in   $S$, $W$ or  $U$   is dominated in $X(G)$ by at least one element of  $A(S)\cup A(D)$.
       Let $ab$ be an arc of $G$.  When $a\in S$, if $b=a_1$ or $a_2$ (where $a_1$ and $a_2$ are the vertices in the definition of $A(a)$), then clearly $ab \in A(S)$; if $b$ is a neighbour of $a$ other than  $a_1$ and $a_2$, then $ab$ is dominated by $a_2a_2'\in A(S)$. If $a\in U$, then $a$ has at least two neighbours  in $S$ and so at least one of them, say, $z\in S$ is different from $b$. Thus $ab$ is dominated by $zv \in A(S)$, where $v \in \{z_1,z_2\}-\{a\}$ with $z_1, z_2$ the neighbours of $z$ used in the definition of $A(z)$. Suppose that $a\in W$ and let $z$ be the unique neighbour of $a$ in $S$.
          If $b\notin S$,  then $ab$ is dominated by  $zv \in A(S)$, where $v \in \{z_1,z_2\}-\{a\}$. So we assume $b\in S$, so that $b=z$.  If $a\in D$, then $ab\in A(D)$ by the definition of $A(D)$. If $a\notin D$, then $a$ is dominated by some vertex $y\in D$ as $D$ dominates all vertices of $W$ in $H$ including $a$. Hence $ab$ is dominated by $yy'\in A(D)$ in $X(G)$. This completes the proof of Claim 1. 
					
In what follows we will show that the upper bound in (\ref{eq:chain}) can be decreased by one. In fact, in the case when $A(S) \cap A(D) \ne \emptyset$, we have $|A(S)\cup A(D)| \le |A(S)| + |A(D)| - 1$ and similar to (\ref{eq:chain}) we obtain $\gamma (X(G)) \le 3 \gamma(G)+ \gamma (H: V_{\delta-1}(H)) - 1$. We now prove that the same bound holds when $A(S) \cap A(D) = \emptyset$. This is achieved by proving:

\medskip    
\textsf{Claim 2:} If $A(S) \cap A(D) = \emptyset$, then we can modify $A(S) \cup A(D)$ to obtain a new dominating set $A_1(S) \cup A_1(D)$ of $X(G)$ with size at most $|A(S) \cup A(D)|-1$.  

\medskip
\textit{Proof of Claim 2:} We first deal with the case where $|S|=1$ (that is, $\gamma(G)=1$).  
	In this case, $D \ne \emptyset$ and $W=V(G)-S$ (and so $U = \emptyset$). Denote $S=\{x\}$ and let $y\in D$.  Since $\delta\ge 2$, we may choose a neighbour $z\ne x$ of $y$. Let $A_1(S) := \{xz, zy, yx\}$.  Note that $A(D)$ contains an arc $yy'$ with $y'\in N(y) \cap S$.
 		If	$yy'\ne yx$, then let $A_1(D)$ be obtained from $A(D)$ by replacing this particular arc $yy'$ by $yx$ but retaining all other $uu' \in A(D)$ in (\ref{eq:D}); otherwise, let $A_1(D) := A(D)$. Since $yx$ appears in both $A_1(S)$ and $A_1(D)$, we have $|A_1(S) \cup A_1(D)| = |A(S) \cup A(D)| - 1$. We claim 
		that each arc of $G$ is dominated by $A_1(S) \cup A_1(D)$. In fact, let $ab$ be an arbitrary arc of $G$. Since $W=V(G)-S$, either $a \in S$ or $a \in W$. If $a=x\in S$, then $ab$ is dominated by 
		$zy$ or $ab=xz$.  Suppose that $a\in W$. If $b=x\in S$, then $ab\in A_1(D)$ or  $ab$ is dominated by an arc in $A_1(D)$. If $b \notin S$, then $ab=zy$ or $ab$ is dominated by either $xz$ or $yx$. Therefore, $A_1(S) \cup A_1(D)$ is a dominating set of $X(G)$. 
		 
	In the rest proof of Claim 2, we assume that $|S|\ge 2$. 

\medskip    
\textsf{Case 1:}	$S$ is not an independent set of $G$. That is,  there is an edge joining two vertices, say $x$ and $y$, of $S$. 
		Let $x'\ne y$ be a neighbour of $x$ and  $y'\ne x$ be a neighbour of $y$. Let $A_1(S)$ be obtained from $A(S)$ by replacing $A(x)$ by $\{xx',xy, yy'\}$,  
		 $A(y)$ by $\{yy',yx, xx'\}$, but leaving other $A(u)$ for $u \in S - \{x, y\}$ unchanged, in (\ref{eq:as}). Since $\{xx',xy, yy'\}$ and $\{yy',yx, xx'\}$ have two common arcs, we have $|A_1(S)| = |A(S)| - 2$. Set $A_1(D) : = A(D)$. Then $|A_1(S) \cup A_1(D)| = |A(S) \cup A(D)| - 2$ and one can show that $A_1(S) \cup A_1(D)$ is a dominating set of $X(G)$. 
		
\medskip    
\textsf{Case 2:} $S$ is an independent set of $G$ and $U\ne \emptyset$. Let $z\in U$ and let $x,y\in S$ be  distinct neighbours of $z$. 
		If $\deg(z)\ge 3$, let $z'$ be a neighbour of $z$ other than $x$ and $y$. Since $\delta\ge 2$, we may choose a neighbour $x'\ne z$ of $x$ and a neighbour $y'\ne z$ of $y$. Let $A_1(S)$ be obtained from $A(S)$ by replacing $A(x)$ by $\{xx',xz, zz'\}$, $A(y)$ by $\{yy',yz, zz'\}$, but leaving other $A(u)$ with $u \in S - \{x, y\}$ unchanged, in (\ref{eq:as}). Since $\{xx',xz, zz'\}$ and $\{yy',yz, zz'\}$ have one arc in common, we have $|A_1(S)| = |A(S)| - 1$. If  $\deg(z)=2$, then let $A_1(S)$ be obtained from $A(S)$ by replacing $A(x)$ by $\{xz, zy\}$, $A(y)$ by $\{yz, zx\}$, but leaving other $A(u)$ with $u \in S - \{x, y\}$ unchanged, in (\ref{eq:as}). Then $|A_1(S)| = |A(S)| - 2$. Set $A_1(D) : = A(D)$ regardless of the degree of $z$. Then $|A_1(S) \cup A_1(D)| \le |A(S) \cup A(D)| - 1$ and $A_1(S) \cup A_1(D)$ is a dominating set of  $X(G)$.

\medskip    
\textsf{Case 3:}	$S$ is an independent set of $G$ and $U= \emptyset$.  Then $W=V(G)-S\ne \emptyset$,  
$|A(D)|\ge 1$ and $|W-D|\ge 1$. Choose a vertex $z\in W-D$. Since $S$ is a dominating set of $G$, we may choose a neighbour $x\in S$ of $z$. Similarly, we may choose a neighbour $v \in D$ of $z$ in $H$ and a neighbour $u\in S$ of $v$ in $G$. It may happen that $u=x$, but this will not affect our subsequent proof.

Let $A_1(S)$ be obtained from $A(S)$ by replacing $A(x)$ by $\{xz, zv, vu\}$,   but leaving other $A(y)$ with $y \in S - \{x\}$ unchanged, in (\ref{eq:as}). Since $v\in D$, $A(D)$ contains an arc $vv'$ with $v'\in N(v) \cap S$.
 	If	$vv'\ne vu$, then let $A_1(D)$ be obtained from $A(D)$ by replacing this particular arc $vv'$ by $vu$ but
 	 retaining all other $yy' \in A(D)$ in (\ref{eq:D}); otherwise, let $A_1(D) := A(D)$. 
Since $vu$   appears in both $A_1(D)$  and  $A_1(S)$, we have $|A_1(S) \cup A_1(D)| = |A(S) \cup A(D)| - 1$. 
We now show that $A_1(S) \cup A_1(D)$ is a dominating set of $X(G)$. 
				In fact, by the definition of $A_1(D)$  and  $A_1(S)$ one can see that every arc with tail not in $N[x]$ that is dominated by $A(S) \cup A(D)$ is now dominated by $A_1(S) \cup A_1(D)$.  
				Let $ab$ be an arc with $a\in N[x]$.  If  $a=x$, then $ab=xz\in A_1(S)$ or 
					$ab$ is dominated by $zv\in A_1(S)$.  Suppose that    $a\in N(x)$.   
				If $a=z$, then $ab=zv$ or   	$ab$ is dominated by      $vu\in A_1(S)$. 
			Suppose that  $a\in N(x)-\{z\}$. If $b\ne x$, then 		$ab$ is dominated by $xz\in A_1(S)$.
			If $b=x$, then either $ab\in A_1(D)$  when $a\in D$, or  	$ab$ is dominated by $yy'\in A_1(D)$, where $y$ is a vertex in $D$ that dominates $a$ in $H$.
			 Therefore, $A_1(S) \cup A_1(D)$ is a dominating set of  $X(G)$. 
				
So far we have completed the proof of Claim 2. By this claim (and the discussion before it) and (\ref{eq:chain}),  we obtain $\gamma (X(G)) \le |A_1(S) \cup A_1(D)| \le |A(S) \cup A(D)| - 1 \le 3 \gamma(G)+ \gamma (H: V_{\delta-1}(H)) - 1$. Since this holds for any $\g(G)$-set $S$ of $G$ and since $H = G_S$, we obtain (\ref{eq:sq}) immediately.   
			
			It is easy to see that the bound in (\ref{eq:sq}) is achieved by the $3$-cycle $C_3$ and in general by any friendship graph (that is, a graph obtained from a number of copies of $C_3$ by identifying one vertex from each copy to form a single vertex). 		
\qed
\end{proof}

It was proved in \cite{KXZ} that, for any connected graph $G$ of order $n \ge 4$ and minimum degree at least $2$, we have $\gamma(X(G))\le n$. Combining this with $n/(1+\Delta(G)) \le \gamma (G)$ \cite{berge1973}, we then have
\be
\label{eq:del}
 \gamma(X(G))   \le  (1+\Delta(G))\gamma(G).
\ee
The following theorem improves this bound for any graph $G$ with $\Delta(G) \ge \d(G) = 2, 3$ or $4$. In Theorem \ref{th:claw} we will give a further improved bound for any claw-free graph with minimum degree at least two.  
 
\delete{ 
\begin{figure}[htb]
\begin{center}
\includegraphics[width=7cm]{familyF.pdf}
\caption{A graph of   ${\cal F}_{3,4}$.}
\label{fig:f}
\end{center}
\end{figure}

 We construct a family of graphs which will be needed in the next result.
 Let $i\geq1$, $j\geq 3$ be integers such that $ij$ is even. Let ${\cal F}_{i,j}$ contain all graphs $F$ with 
 vertex set $A \cup B$  such that  $F[A]\cong \overline{K}_i$  is an independent set of   $i$ vertices,
  $F[B]\cong \frac{ij}{2} \cdot K_2$ is a set independent edges, and each vertex of $A$ is connected to 
  exactly  $j$  pairwise nonadjacent vertices of $B$ with the condition that  distinct vertices of 
  $A$ does not share a  common neighbour in $B$. Note that each vertex of  $F$  has degree  $2$ or $j$ and 
  $F$ is not necessarily connected.
  A graph in the family ${\cal F}_{3,4}$  is depicted in Figure \ref{fig:f}.
 }

\begin{theorem}
\label{th:deg34}
 Let $G$ be a graph. 
\begin{itemize} 
\item[\rm (a)] If  $\delta(G) =2$, then
    $\gamma (X(G))\le   \left(\displaystyle{\frac{\Delta(G)}{2}}+3\right)\gamma(G)-1$;
	%	and equality holds if and only if $G\in {\cal F}_{\gamma(G), \Delta(G)}$;
\item[\rm (b)]   if  $\delta(G) = 3$, then
    $\gamma (X(G))\le   \left(\displaystyle{\frac{2\Delta(G)}{5}}+3\right)\gamma(G)-1$;
\item[\rm (c)]   if  $\delta(G) = 4$, then
    $\gamma (X(G))\le   \left(\displaystyle{\frac{3(\Delta(G)+2)}{8}}+3\right)\gamma(G)-1$.
\end{itemize}
Moreover, the bounds in (a) and (b) are attainable. 
\end{theorem}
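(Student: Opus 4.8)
The plan is to feed the three parts of Lemma~\ref{le1} into the inequality (\ref{eq:sq}) of Theorem~\ref{th:dom}, after first controlling the order of $G_S$. Fix a $\gamma(G)$-set $S$ and put $H:=G_S=G-S$. Since each of the $\gamma(G)$ vertices of $S$ dominates at most $1+\Delta(G)$ vertices of $G$, we have $n\le(1+\Delta(G))\gamma(G)$ and hence $|V(H)|=n-\gamma(G)\le\Delta(G)\gamma(G)$. This single estimate is what converts the order-dependent bounds of Lemma~\ref{le1} into bounds in $\gamma(G)$ and $\Delta(G)$. Note that for $\delta(G)=2,3,4$ the term $\gamma(G_S:V_{\delta-1}(G_S))$ in (\ref{eq:sq}) is exactly $\gamma(H:V_1(H))$, $\gamma(H:V_2(H))$ and $\gamma(H:V_3(H))$ respectively, which are the quantities controlled by parts (a), (b) and (c) of Lemma~\ref{le1} applied to $H$. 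Thus the whole theorem reduces to substituting Lemma~\ref{le1} together with the order bound into (\ref{eq:sq}).

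Parts (a) and (c) are then routine. For (a), Lemma~\ref{le1}(a) gives $\gamma(H:V_1(H))\le|V(H)|/2\le\Delta(G)\gamma(G)/2$, and (\ref{eq:sq}) yields $\gamma(X(G))\le 3\gamma(G)+\Delta(G)\gamma(G)/2-1$, which is (a). For (c), Lemma~\ref{le1}(c) gives $\gamma(H:V_3(H))\le 3(|V(H)|+2)/8$; since $\gamma(G)\ge1$ we have $|V(H)|+2\le\Delta(G)\gamma(G)+2\gamma(G)=(\Delta(G)+2)\gamma(G)$, so $\gamma(H:V_3(H))\le 3(\Delta(G)+2)\gamma(G)/8$, and (\ref{eq:sq}) gives (c). No further idea is needed here, and the slack introduced by $2\le 2\gamma(G)$ explains why (c) is not claimed to be sharp.

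Part (b) is the crux and the main obstacle. Lemma~\ref{le1}(b) only delivers $\gamma(H:V_2(H))\le 2|V(H)|/5$ under the hypothesis that no component of $H$ lies in ${\cal A}$, and this hypothesis can genuinely fail: a $4$-cycle whose four vertices each have exactly one neighbour in $S$ is a legitimate component of $G-S$ when $\delta(G)=3$, and on it $\gamma=2$ exceeds $2\cdot4/5$. On such a component the auxiliary-graph trick used inside the proof of Lemma~\ref{le1}(b) does not help, because the cycle is already a full component of $H$ all of whose vertices lie in $V_2(H)$, so the naive substitution does not reach $\left(2\Delta(G)/5+3\right)\gamma(G)-1$. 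The plan to overcome this is to exploit the minimisation over all $\gamma(G)$-sets present in (\ref{eq:sq}): I would try to show that some $\gamma(G)$-set $S$ can always be chosen so that $G-S$ has no component in ${\cal A}$, by pushing a vertex of an offending component into $S$ and deleting a now-redundant vertex, using $\delta(G)=3$ to preserve both domination and minimum cardinality. Should such a swap not always be available, the fallback is to treat the finitely many members of ${\cal A}$ directly, building the relevant arcs of a dominating set of $X(G)$ by hand rather than through the generic set $A(S)\cup A(D)$ of Theorem~\ref{th:dom}, and checking case by case that the resulting size stays within the claimed bound. Disposing of ${\cal A}$ is where essentially all the work of the theorem resides.

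Finally, for the sharpness of (a) and (b) the plan is to exhibit extremal families. A candidate must make the order bound $|V(H)|\le\Delta(G)\gamma(G)$ tight (efficient domination with every dominating vertex of degree $\Delta(G)$) and simultaneously make the relevant part of Lemma~\ref{le1} tight --- for (a), $G-S$ a disjoint union of the equality graphs $C_4$ and $F\circ K_1$ of Lemma~\ref{le1}(a); for (b), disjoint copies of the $2/5$-extremal graphs --- while losing nothing in the chain (\ref{eq:chain}). I expect these examples to be built from a minimum dominating set joined efficiently to the prescribed components, with a matching/counting lower bound confirming that $\gamma(X(G))$ cannot be smaller.
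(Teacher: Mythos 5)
Your reductions for parts (a) and (c) are correct and coincide with the paper's: substitute Lemma \ref{le1}(a) and Lemma \ref{le1}(c) into (\ref{eq:sq}), using $|V(G-S)|\le \Delta(G)\gamma(G)$ and, in (c), $2\le 2\gamma(G)$. The genuine gap is in part (b), which you correctly identify as the crux but do not prove. Your primary strategy --- re-choosing the $\gamma(G)$-set $S$ so that $G-S$ has no component in ${\cal A}$ --- fails in general: take the wheel obtained from a $4$-cycle by adding a hub adjacent to all four rim vertices; then $\delta=3$, the hub is the \emph{unique} minimum dominating set, and deleting it leaves exactly a $C_4\in{\cal A}$ all of whose vertices lie in $W$, so no swap is available. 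Your fallback (handle the finitely many members of ${\cal A}$ by hand) is indeed what the paper does, but that is where essentially all of the content of part (b) resides and none of it appears in your proposal. The paper's argument distinguishes two cases for an ${\cal A}$-component $R_j$ of $G-S$: if $R_j$ contains a vertex $z\notin W$, then only $W\cap V(R_j)$ needs to be dominated inside $H$ and a minimum dominating set of $R_j-\{z\}$ has size $1$ or $2<2r_j/5$ (since $r_j=4$ or $7$); if $V(R_j)\subseteq W$, one still dominates only $R_j-\{z\}$ for a suitably chosen $z$, and compensates by re-selecting the three arcs associated with the $S$-neighbour of a neighbour $u$ of $z$ so that the arc $uy$ (with $y\in V(R_j)$) covers all arcs emanating from $z$. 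Without this (or an equivalent device), the naive substitution into (\ref{eq:sq}) simply does not yield $\bigl(2\Delta(G)/5+3\bigr)\gamma(G)-1$, as you yourself observe.

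Two smaller omissions: once the generic set $A(S)\cup A(D)$ of Theorem \ref{th:dom} is replaced by a hand-built one, the ``$-1$'' in the bound must be re-justified (the paper does this by repeating the modification of Claim 2 of Theorem \ref{th:dom}); your proposal is silent on this. And for sharpness your plan of building elaborate extremal families tight in both the order bound and Lemma \ref{le1} is unnecessary and unexecuted --- the paper attains (a) and (b) already with $K_3$ and $K_4$.
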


\begin{proof}  
Denote $\d := \d(G)$. As in the proof of Theorem \ref{th:dom}, let $S$ be a $\g(G)$-set of $G$ and denote $H := G - S$. Let $H'$ be obtained from $H$ by deleting all isolated vertices.
Then  $\gamma (H: V_{1}(H)) \leq \gamma(H')$. 

(a) If  $\delta=2$, then  by  Theorems \ref{th:dom},    Lemma \ref{le1}(a) and the fact that  $|V(G)| \leq (\Delta(G)+1)\gamma (G)$,   we have 
 \begin{eqnarray}
 \gamma (X(G)) &\leq&  3\gamma(G)+ \gamma (H: V_{1}(H))-1    \nonumber \\
        %   &\leq&    3\gamma(G)+  \gamma(H')       \\
                 &\leq&    3\gamma(G)+    \displaystyle{\frac{n-\gamma(G)}{2}} -1    \nonumber \\
           &\leq&    3\gamma(G)+    \displaystyle{\frac{(\Delta(G)+1)\gamma (G)-\gamma(G)}{2}}-1   \nonumber  \\  
                       &=&  \left(\displaystyle{\frac{\Delta(G)}{2}}+3\right)\gamma(G)-1. \nonumber   \label{eq:delta2} 
\end{eqnarray}

(b) Assume $\delta=3$. Let $A(S)$ and $W$ be defined by (\ref{eq:as}) and (\ref{eq:W}), respectively, with $A(x)$ as given in (\ref{eq:ax}) for each $x \in S$. By (\ref{eq:WV}), we have $W\subseteq V_{2}(H)$.
   
   Let   $R_1,R_2,\ldots, R_s$ be the set of  components of
   $H'$, and let $r_j$ be the order of $R_j$. We are going to prove that we can choose an 
   appropriate subset $D_j$ of $V(R_j)$ for each $j$ such that $|D_j| \leq 2r_j/5$ and 
   $A(S)\cup A(\cup_{j=1}^s D_j)$ is a dominating set of $X(G)$.  
   
   In fact, if $R_j$ is not 
   isomorphic to any graph in the family $\cal{A}$ (see Figure \ref{fig:a}), then we choose $D_j$ to be a 
   minimum $(R_j: V_2(R_j))$-dominating set. By Lemma \ref{le1}(b), we have 
   $|D_j| = \gamma(R_j: V_2(R_j)) \leq 2r_j/5$.

   Suppose that a component $R_j$ is isomorphic to  some graph in $\cal{A}$.   If  $R_j$ contains a vertex $z$  which  is not in $W$, 
    then we choose  $D_j$ to be a minimum    dominating set of $R_j-\{z\}$. Since in this case $D_j$ is also an 
    $(R_j: W \cap V(R_j))$-dominating set, we have
     $\gamma(R_j: W \cap V(R_j)) \leq \gamma (R_j-\{z\}) =   |D_j|  < 2r_j/5$  by noting 
     that $r_j=4$ or 7.  
     
Now assume that all vertices of  $R_j$  are  in $W$. Let  $z$ be an
arbitrary vertex of $R_j$ and $D_j$ a minimum dominating set
of $R_j - \{z\}$. Let $A(D_j)$ be the set of arcs $xx'$ of $G$ 
such that $x \in D_j$ and $x'$ is the unique neighbour of $x$ in $S$. 
It is not hard to verify that $|D_j|  < 2r_j/5$.
Let $u$ be a neighbour of $z$ in $R_j$. Since $\d(R_j) = 2$ (see Figure \ref{fig:a}), 
we can choose a neighbour $y$ of $u$ in $G$ other than $u'$ and $z$ (so that $y$ 
is in $R_j$), where $u'$ is the unique neighbour of $u$ in $S$. 
In forming $A(S)$ by (\ref{eq:as}), we choose $A(u)$ to be
$\{u' u, uy, u' v\}$, where $v$ is a neighbour of $u'$ other than $u$. 
Similar to what we did in the proof of Theorem \ref{th:dom}, when necessary 
we may modify $A(S)$ to obtain a new set $A(S)$
(also denoted by $A(S)$) such that every arc of $G$ with tail $z$ is dominated in $X(G)$ 
by either $uy$ or an arc in $A(z')$, where $z'$ is the unique neighbour of $z$ in $S$. 
It can be verified that we can always choose an appropriate pair of vertices $z, u$ 
such that this happens. In this way we ensure that all arcs 
emanating from vertices of $R_j$ are dominated by $A(S) \cup A(D_j)$ in $X(G)$.

With $D_j$ as above we now set $D := \cup_{j=1}^s D_j$. As in (\ref{eq:D}), 
choose a set of arcs of $G$ by setting $A(D_j) = \{xx': x \in D_j\}$, where $x'$ is a neighbour of $x$ 
in $S$. (As seen in the previous paragraph, such a set $A(D_j)$ is unique 
when $R_j$ is isomorphic to some graph in $\cal{A}$ and $V(R_j) \subseteq W$.)
Set $A(D) = \cup_{j=1}^s A(D_j)$. Since $|D_j|  \le 2r_j/5$ for each $j$, 
we have $|A(D)| = |D|= \Sigma_{j=1}^s|D_j| \leq 2 |V(H')|/5 \leq 2 |V(G)-S|/5$. 
Similar to the proof of Theorem \ref{th:dom}, once can verify
that  $A(S)\cup A(D)$ is a dominating set of $X(G)$. Thus   
 \begin{eqnarray*}
 \gamma (X(G))  & \le &      |A(S)|+ |A(D)|  -1 \\ 
 &\leq&      3\gamma(G)+ \displaystyle{\frac{2 |V(G)-S|}{5}}-1  \\
            %     &\leq&    3\gamma(G)+    \displaystyle{\frac{ 2(n-\gamma(G))}{5}}     \\
           &\leq&    3\gamma(G)+    \displaystyle{\frac{2 ((\Delta(G)+1)\gamma (G)-\gamma(G))}{5}} -1  \nonumber  \\  
                &=&         \left(\displaystyle{\frac{2\Delta(G)}{5}}+3\right)\gamma(G)-1.  
 \end{eqnarray*}
 
(c) If  $\delta=4$, then  by  Theorem \ref{th:dom} and Lemma \ref{le1}(c),  
 \begin{eqnarray*}
 \gamma (X(G)) &\leq&  3\gamma(G)+ \gamma (H: V_{3}(H)) -1   \\
          % &\leq&    3\gamma(G)+  \gamma(H')       \\
                 &\leq&    3\gamma(G)+    \displaystyle{\frac{ 3(n-\gamma(G)+2)}{8}}  -1   \\
           &\leq&    3\gamma(G)+    \displaystyle{\frac{3 ((\Delta(G)+1)\gamma (G)-\gamma(G)+2)}{8}}  -1 \nonumber  \\  
                       &=&  \left(\displaystyle{\frac{3(\Delta(G)+2)}{8}}+3\right)\gamma(G)-1.  
 \end{eqnarray*}
 The bounds in (a) and (b) are attained by the complete graphs $K_3$ and $K_4$ respectively.
This completes the proof.   \qed
  \end{proof}

 %$\gamma (X(G)) \leq  3\gamma(G)+ \gamma (H: V_{1}(H)) 
%\leq   3\gamma(G)+    \displaystyle{\frac{n-\gamma(G)}{2}}      \leq 
% \left(\displaystyle{\frac{\Delta(G)}{2}}+3\right)\gamma(G)$.
 % 3\gamma(G)+   \displaystyle{\left\lceil \frac{\Delta(G)\gamma(G)}{2}\right\rceil}$.
% We next show that no graph achieves the upper bound   (\ref{eq1}). Suppose otherwise that there is  a graph $G$ attaining the bound   (\ref{eq1}), then   equality holds throughout all inequalities above.  Specifically, $H'=H$, that is, $H$ is of 
% minimum degree 1;  and   $\gamma(H')= \displaystyle{\frac{|V(H')|}{2}}$, which implies that each component   of 
% $H$ ($=H'$) is isomorphic to either the cycle $C_4$ or the corona $F \circ K_1$, where $F$ is a connected graph. 

A graph is {\em claw-free} if it does not contain the complete bipartite graph $K_{1,3}$ as an induced subgraph. The following result significantly improves the upper bounds in Theorem \ref{th:deg34} for claw-free graphs, and it does not require the minimum degree to be 2, 3 or 4.

\begin{theorem}
\label{th:claw}
  Let $G$ be a claw-free graph with $\delta(G)\geq 2$. Then 
  \be
  \label{eq:claw}
  \gamma (X(G))\leq 4\gamma(G)
  \ee
   and this bound is attainable.
\end{theorem}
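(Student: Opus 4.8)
The plan is to reduce the global bound to a local statement about neighbourhoods and then exploit claw-freeness. Let $S$ be a $\gamma(G)$-set. Since $N[S]=V(G)$, every arc $uv$ of $G$ has its tail $u$ in $N[x]$ for at least one $x\in S$. Hence it suffices to prove the following local claim: for each $x\in S$ one can select a set $F_x$ of at most four arcs of $G$ that dominates, in $X(G)$, every arc of $G$ whose tail lies in $N[x]$. Granting this, $F:=\bigcup_{x\in S}F_x$ is a dominating set of $X(G)$ with $|F|\le 4|S|=4\gamma(G)$, which is (\ref{eq:claw}). Note that the arcs of $F_x$ are allowed to have tails anywhere, so the union bound is unaffected by where they sit.

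To set up the local claim, fix $x\in S$ and split the arcs with tail in $N[x]$ into three types: (A) the out-arcs $xc$ with $c\in N(x)$; (B) the arcs $wc$ with $w\in N(x)$ and $c\in N(w)\setminus\{x\}$; and (C) the back-arcs $wx$ with $w\in N(x)$. Two observations drive the construction. First, for any two distinct neighbours $p,q$ of $x$ (which exist since $\delta(G)\ge 2$) the arcs $xp,xq$ together dominate every arc of type (B), because $xp$ dominates $wc$ exactly when $w\ne p$ and $c\ne x$. Second, and this is where claw-freeness enters, $G[N(x)]$ has independence number at most two: if $a\in N(x)$, then any two vertices of $N(x)$ non-adjacent to $a$ must be adjacent to each other, for otherwise they would form, with $a$, three pairwise non-adjacent vertices in $N(x)$, i.e. a claw centred at $x$; so the non-neighbours of $a$ in $N(x)$ form a clique, and taking $b$ in this clique (or $b=a$ if it is empty) gives a dominating set $\{a,b\}$ of $G[N(x)]$ of size at most two.

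Using $\delta(G)\ge 2$ to choose interior heads $a'\in N(a)\setminus\{x\}$ and $b'\in N(b)\setminus\{x\}$, the idea is to build $F_x$ from two out-arcs of $x$ (clearing type (B) and two arcs of type (A)) together with two ``rim'' arcs emanating from $a$ and $b$: the arc $aa'$ dominates $xc$ for every $c\ne a$ and dominates the back-arc $wx$ for every $w\in N(a)\setminus\{a'\}$, and symmetrically for $bb'$. Since $\{a,b\}$ dominates $N(x)$, these two rim arcs dominate almost all of (A) and (C). The subtle point, and the main obstacle, is the handful of back-arcs that this scheme misses: the ``self'' back-arcs $ax,bx$ of the two chosen tails (unreachable from $a$ or $b$ themselves), and any back-arc whose tail coincides with a chosen head. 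These genuinely cannot be covered when the dominating pair is non-adjacent, as already happens for $N(x)\cong 2K_2$ or $N(x)\cong C_5$. I therefore expect the real work to be a short case analysis on whether $a\sim b$ and on the local degrees, in which the four arcs are re-chosen as a mixture of out-arcs, rim arcs and back-arcs so that each offending back-arc is either placed directly into $F_x$ or dominated by a partner back-arc. (For instance, when $N(x)\cong C_5=v_1v_2v_3v_4v_5$ the four arcs $xv_2,\,v_2v_1,\,v_5v_1,\,v_1x$ already dominate every arc with tail in $N[x]$; a similar mixed choice handles $N(x)\cong 2K_2$.) In each case one checks directly, using the adjacency rule that $uv$ and $cd$ are adjacent iff $u\sim c$, $v\ne c$ and $u\ne d$, that the four selected arcs dominate all of (A), (B) and (C).

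Finally, for attainability I would exhibit an explicit claw-free family for which $\gamma(X(G))=4\gamma(G)$, assembled from the tight local configurations above so that every vertex of a minimum dominating set really forces four arcs. The verification there is the dual bookkeeping already visible in the small examples: around such a vertex the out-arcs of $x$, the arcs of type (B), and the back-arcs are pairwise only dominated by arcs of complementary type, so no three arcs can dominate all of them, forcing $|F_x|\ge 4$ and hence equality.
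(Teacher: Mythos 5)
Your overall strategy coincides with the paper's: reduce to a local claim that each $x$ in a $\gamma(G)$-set admits four arcs dominating every arc with tail in $N[x]$, and use claw-freeness to extract a dominating pair $\{x_1,x_2\}$ of $G[N(x)]$. But the heart of the proof --- actually exhibiting four arcs that work --- is missing. Your concrete attempt (two out-arcs $xp,xq$ plus two rim arcs $aa',bb'$) fails on the back-arcs $ax$, $bx$ and on back-arcs whose tail is a chosen head, as you yourself observe, and you then defer to an unspecified ``short case analysis on whether $a\sim b$ and on the local degrees,'' supported only by two ad hoc examples ($N(x)\cong C_5$ and $N(x)\cong 2K_2$). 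That deferral is the gap: without a uniform construction and its verification, the theorem is not proved. The fix is a different allocation of the four arcs. The paper takes, when $\deg(x)=2$, the set $\{xx_1,xx_2,x_1x,x_2x\}$, and when $\deg(x)\ge 3$ it picks any $x_3\in N(x)-\{x_1,x_2\}$, notes that $x_3$ is adjacent to $x_1$ or $x_2$ (say $x_1$) because $\{x_1,x_2\}$ dominates $G[N(x)]$, and takes $\{xx_1,\,x_1x_3,\,x_3x,\,x_2x\}$. The point you missed is that a \emph{single} out-arc $xx_1$ already dominates every arc $wc$ with $w\in N(x)-\{x_1\}$ and $c\ne x$; the leftover arcs with tail $x_1$ are picked up by $x_1x_3$ and $x_3x$, and the back-arcs $wx$ are picked up by $x_1x_3$, $x_3x$ or $x_2x$ according to whether $w$ is dominated by $x_1$ or by $x_2$. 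This single choice verifies uniformly, with no case split on adjacency of the dominating pair.

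The attainability claim is also only asserted, not established. The paper proves it by taking $G^*$ to be two complete graphs $K_s$, $K_t$ ($s,t\ge 2$) identified at one vertex, so $\gamma(G^*)=1$, invoking the known bound $\gamma(X(H))\ge 3$ for graphs of minimum degree at least two, and then checking directly that no three arcs of $G^*$ dominate $X(G^*)$, whence $\gamma(X(G^*))=4=4\gamma(G^*)$. You would need to supply some such explicit example and lower-bound argument rather than a promissory note.
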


 \begin{proof}  
 Let $S$ be a $\gamma(G)$-set of $G$. We prove (\ref{eq:claw}) by constructing a dominating set of $X(G)$ with size at most $4\gamma(G)$. 
 
Since $G$ is claw-free, for each $x \in V(G)$, the induced subgraph $G[N(x)]$ has independence number, and hence domination number, at most 2.
Thus, for each $x\in S$, we can choose a dominating set of $G[N(x)]$ with size $2$, say, $D_x := \{x_1, x_2\}$, where $x_1, x_2 \in N(x)$. We then associate      $x$ with a set  $A(x)$ of four arcs of $G$ in the following way.
   If $\deg(x)=2$, then let $A(x) =\{xx_1,xx_2,x_1x,x_2x\}$.   If $\deg(x)\geq 3$,  then
	let $x_3 \in N(x) - D_x$.
	Since  $D_x$ is a   dominating set  of  $G[N(x)]$, 
	  $x_3$ 
	is adjacent to at least one of $x_1$ and $x_2$.  We may assume without loss of generality that 
   $x_3$  is adjacent to $x_1$, and then we set $A(x) =\{xx_1, x_1x_3, x_3x, x_2x\}$. Define 
  $$
  A(S) := \cup_{x\in S} A(x).
  $$ 

We now show that $A(S)$ is a dominating set of $X(G)$. To this end it suffices to show that any arc $ab$  of $G$ outside $A(S)$ is dominated in $X(G)$ by at least one arc in $A(S)$. In fact, if $a\in S$, say, $a = x \in S$, then either $ab =xx_1\in A(x)$, or $b\ne x_1$  and  $ab$ is dominated by $x_1x_3 \in A(x)$. 

Suppose then that $a\notin S$. Since $S$ is a dominating set of $G$, $a$ has at least one neighbour in $S$, say, $x$. Consider the case $b=x$ first. In this case, if $a\in \{x_2,  x_3\}$ then $ab\in  A(x)$; if $a=x_1$ then $ab=x_1x$ is dominated by $x_3x$; and if $a\notin \{x_1, x_2,  x_3\}$ then one of  $x_1x_3$ and $x_2x$ dominates $ab$ since $\{x_1, x_2\}$ is  a dominating set of $G[N(x)]$. 
In the case where $b\ne x$, if $a\ne x_1$ then $ab$ is dominated by 
 $xx_1$, and if $a=x_1$ then either $ab=x_1x_3 \in A(S)$ or $ab$ is dominated by $x_3x$.
 
So far we have proved that every arc of $G$ outside $A(S)$ is dominated by $A(S)$. Therefore, $\gamma (X(G))\leq |A(S)| \leq 4|S|=4\gamma(G)$ and (\ref{eq:claw}) is established.
  
To show that (\ref{eq:claw}) is sharp, let $G^*$ be obtained from two vertex-disjoint complete graphs $K_s$ and $K_t$ of orders $s, t \ge 2$ respectively by identifying a vertex of $K_s$ with a vertex of  $K_t$. Since  $\gamma (X(H))\geq 3$ for  any graph $H$ with $\d(H) \ge 2$   \cite[Theorem 7]{KXZ}, we have $\g(X(G^*)) \ge 3$. However, no three arcs of  $G^*$ dominate all other arcs of $G^*$. Therefore, $\g(X(G^*)) = 4 = 4 \g(G^*)$.
   \end{proof}     \qed
 
Since line graphs are claw-free, the bound (\ref{eq:claw}) holds in particular when $G$ is the line graph of some graph of minimum degree at least two.

 % begin delete
 \delete{
 ===================================
 the following part will be deleted
  
\begin{corollary}
\label{coro}
   If $G$ is a graph with minimum degree $2$, then 
  % $\gamma (X(G))\leq3\gamma(G)+       \displaystyle{\left\lceil \frac{\Delta(G)\gamma(G)}{2}\right\rceil}$.
    $\gamma (X(G))\le   \left(\displaystyle{\frac{\Delta(G)}{2}}+3\right)\gamma(G)$.
   
\end{corollary}

\begin{proof}  Let $G$, $S$ and $H$ be defined as in Theorem \ref{th:dom}, and  
$H'$ be obtained from $H$ by deleting all islolated vertices.
Then, it is not hard to see that $\gamma (H: V_{1}(H)) \leq \gamma(H') \leq 
\displaystyle{\frac{|V(H')|}{2}} \leq \displaystyle{\frac{n-\gamma(G)}{2}}$.
By  Theorem \ref{th:dom} and the fact that  $n\leq (\Delta(G)+1)\gamma (G)$,  
 \begin{eqnarray*}
 \gamma (X(G)) &\leq&  3\gamma(G)+ \gamma (H: V_{1}(H))  \nonumber \\
                 &\leq&    3\gamma(G)+    \displaystyle{\frac{n-\gamma(G)}{2}}   \nonumber   \\
           &\leq&    3\gamma(G)+    \displaystyle{\frac{(\Delta(G)+1)\gamma (G)-\gamma(G)}{2}}   \nonumber  \\  
                       &=&  \left(\displaystyle{\frac{\Delta(G)}{2}}+3\right)\gamma(G),    \label{eq1} 
\end{eqnarray*}
 %$\gamma (X(G)) \leq  3\gamma(G)+ \gamma (H: V_{1}(H)) 
%\leq   3\gamma(G)+    \displaystyle{\frac{n-\gamma(G)}{2}}      \leq 
% \left(\displaystyle{\frac{\Delta(G)}{2}}+3\right)\gamma(G)$.
 % 3\gamma(G)+   \displaystyle{\left\lceil \frac{\Delta(G)\gamma(G)}{2}\right\rceil}$.
% We next show that no graph achieves the upper bound   (\ref{eq1}). Suppose otherwise that there is  a graph $G$ attaining the bound   (\ref{eq1}), then   equality holds throughout all inequalities above.  Specifically, $H'=H$, that is, $H$ is of 
% minimum degree 1;  and   $\gamma(H')= \displaystyle{\frac{|V(H')|}{2}}$, which implies that each component   of 
% $H$ ($=H'$) is isomorphic to either the cycle $C_4$ or the corona $F \circ K_1$, where $F$ is a connected graph. 
which completes the proof.
  \end{proof} \qed

 \begin{lemma}
 \label{le0}
 If $G$ is a connected graph  with $\delta(G)\geq 2$  and $G\notin \cal{A}$,  then 
  $\gamma (G) \leq  \displaystyle{\frac{2n}{5}}$.
 \end{lemma}

 For graphs  with bigger minimum degree, the upper bound in Corollary \ref{coro} can be improved. 
 Before present the improved upper bound for graphs with minimum degree, we have the following lemma
 
 \begin{lemma}
 \label{le1}
 Let $H$ be a graph of order $n$. Then, $\gamma (H: V_{3}(H)) \leq  \displaystyle{\frac{2(n+1)}{5}}$.
 \end{lemma}

 \begin{proof}  Let $H$ be a graph of order $n$,  and  $V_{3}(H)$ the set of vertices of degree at least 3 of $H$.
 Denote $H_3:= H[V_{3}(H)]$ and $W:= N(V_{3}(H)) - V_{3}(H)$. Then, $W$ contains  all vertices outside $V_{3}(H)$
but  neighbouring  a vertex of  $V_{3}(H)$, and $H[W]$  has maximum degree 1, which implies that each component
of  $H[W]$  is  isomorphic to  $K_1$ or $K_2$. Suppose  that  $H[W]$ contains $l$ isolated vertices, say, 
$x_1,x_2,\ldots, x_l$, where $l\geq 0$ is integer. 

 Let $H_3' = H[V_{3}(H) \cup W]$, and $z'$ be the unique neighbour of $z\in W$  in $V_{3}(H)$ (in  $H_3'$). 
 Note that it may occur that $z_1'=z_2'$ for distinct  $z_1,z_2 \in W$. 
 Define a new graph   $H_3''$ as follows:
   If $l=0$, set  $H_3'':  =H_3'$;    if $l\ne 0$ is even,  let  $H_3''$ be the graph obtained 
 from   $H_3'$ by adding all edges $\{x_{j}, x_{j+1}\}$, where $j=1,3,5,\ldots, l-1$;
 if $l$ is odd, let  $H_3''$ be the graph obtained 
 from   $H_3'$ by adding a new vertex $\bar{x_l}$, together with all edges
   $\{x_{l},  \bar{x_l}\}$,    $\{\bar{x_l}, x_{l}'\}$ and     $\{x_{j}, x_{j+1}\}$, where
   $x_{l}'$ is the unique neighbour of $x_l$  in $V_{3}(H)$  and  $j=1,3,5,\ldots, l-2$. 
   Clearly,  $H_3''$ has minimum degree 2.

   Let $R_1,R_2,\ldots, R_s$ be the set of all components of  $H_3''$, and each $R_j$ be of order $r_j$. 
   It suffices to show that $\gamma(R_j: V(R_j) \cap  V_3(H)) \leq  \displaystyle{\frac{2r_j}{5}}$.
    By Lemma \ref{le0}, if $R_j \notin \cal{A}$, it follows immediately that 
     $\gamma(R_j: V(R_j) \cap  V_3(H)) \leq  \gamma(R_j)\leq \displaystyle{\frac{2 r_j}{5}}$.
     If  $R_j \in \cal{A}$,  then $R_j \not\cong C_4$ or $C_7$, as $R_j$ contains   one vertex 
     of degree  at least 3. Suppose that $R_j$ is isomorphic to a graph in $\cal{A}$ other than  $C_4, C_7$,  
     it is not hard to see that
      $\gamma(R_j: V(R_j) \cap  V_3(H)) \leq 2 = \displaystyle{\frac{2}{7}} < \displaystyle{\frac{2\cdot7}{5}}$.
     Thus, $\gamma (H: V_{3}(H))  =  \Sigma_{j=1} ^s  \displaystyle{\frac{2 r_j }{5}}
     =  \displaystyle{\frac{2 |V(H'')|}{5}} \leq  \displaystyle{\frac{2 (|V(H')|+1)}{5}}  \leq 
     \displaystyle{\frac{2 (|V(H)|+1)}{5}}   =  \displaystyle{\frac{2(n+1)}{5}}$.
  \end{proof} \qed

\begin{theorem}
\label{th:dom4}
  If $G$ is a graph with minimum degree $4$, then 
   $\gamma (X(G))\leq 3\gamma(G) +   \displaystyle{\frac{2(\Delta(G)\gamma(G)+1)}{5}}$. 
\end{theorem}

\begin{proof}   Let $G$ be a graph with minimum degree $4$,  and 
   $S$, $H$ be defined as in the proof of Theorem \ref{th:dom}. 
By Theorem \ref{th:dom} and Lemma \ref{le1},
 \begin{eqnarray*}
  \gamma (X(G))  &\leq&   3\gamma(G) +  \gamma (H: V_{3}(H)) \\
&\leq&        3\gamma(G) +    \displaystyle{\frac{2 (|V(H)|+1)}{5}} \\
&=&      3\gamma(G) +    \displaystyle{\frac{2 (n- \gamma(G)+1)}{5}} \\
&\leq&     3\gamma(G) +    \displaystyle{\frac{2 ((\Delta(G)+1)\gamma(G)- \gamma(G)+1)}{5}}  \\
 &=& 3\gamma(G) +   \displaystyle{\frac{2(\Delta(G)\gamma(G)+1)}{5}}.
 \end{eqnarray*}
 The result follows.
  \end{proof}  \qed

 ==========================
} 
% end delete 

\vskip 1pc 
{\bf Acknowledgement}~~
Guangjun Xu was supported by the MIFRS and SFS scholarships of the University 
of Melbourne.  
 
\small


\begin{thebibliography}{99}



  




\bibitem{BH} 
H.~J.~Broersma and C.~Hoede, Path graphs, {\em J. Graph Theory}
{\bf 13} (1989), 427--444. 

\bibitem{bmg} 
 C. Balbuena,  P.   Garc{\'{\i}}a-V{\'a}zquez  and  L. P.  Montejano, 
On the connectivity and restricted edge-connectivity of 3-arc graphs,  {\em Discrete Appl. Math.}, 2013,  doi:10.1016/j.dam.2007.06.014.  



\bibitem{berge1973}
C. Berge, {\em Graphs and Hypergraphs}, North-Holland Publishing Company, Amsterdam, 1973.
 

 

\bibitem{FJK}
J. F. Fink, M. S. Jacobson, L. F. Kinch   and J. Roberts,
On graphs having domination number half their order, {\em Period. Math. Hungar.} {\bf 16}  (1985), 287--293.
 




%\bibitem{Garey-Johnson}
  %M.~R.~Garey, D.~S.~Johnson and L. Stockmeyer, Some simplified NP-complete graph problems, {\em Theoret. Comput. Sci.} {\bf 1} (1976), 237--267.

\bibitem{Gardiner-Praeger-Zhou99}
A.~Gardiner, C.~E.~Praeger and S.~Zhou, Cross-ratio graphs, {\em
J. London Math. Soc.} (2) {\bf 64} (2001), 257--272.

%\bibitem{HE} 
%C. C. Harner and R. C. Entringer, Arc colorings of digraphs, {\em J.  Combin. Theory
%B} {\bf 13} (1972), 219--225.

\bibitem{HB} 
 R. L.  Hemminger  and  L. W.  Beineke,  Line graphs and line digraphs, in: {\em Selected Topics in Graph Theory}, Academic Press, New York, 1978. 



\bibitem{MPZ}
M.~A.~Iranmanesh, C.~E.~Praeger and S.~Zhou, Finite symmetric
graphs with two-arc transitive quotients, {\em J. Combin. Theory
(Ser. B)} {\bf 94} (2005), 79--99.


\bibitem{KXZ}
M.~Knor, G.~ Xu and S.~Zhou,  A study of 3-arc graphs,
{\em Discrete Appl. Math.} {\bf 159} (2011), 344--353.



\bibitem{KZ}
M.~Knor and S.~Zhou, Diameter and connectivity of $3$-arc graphs,
{\em Discrete Math.} {\bf 310} (2010), 37--42.

\bibitem{Li-Praeger-Zhou98}
C.~H.~Li, C.~E.~Praeger and S.~Zhou, A class of finite symmetric
graphs with 2-arc transitive quotients, {\em Math. Proc. Cambridge
Phil. Soc.} {\bf 129} (2000), 19--34.

\bibitem{LL}
H.~E.~Li and Y.~X.~Lin, On the characterization of path graphs,
{\em J. Graph Theory} {\bf 17} (1993), 463--466. 

\bibitem{LZ}
Z.~Lu and S.~Zhou, Finite symmetric graphs with $2$-arc transitive
quotients (II), {\em J. Graph Theory} {\bf 56} (2007), 167--193.



\bibitem{MS}
W. ~McCuaig and  B. Shepherd,    Domination   in graphs  with minimum degree two,
{\em J. Graph Theory} {\bf 13} (1989), 749--762.


\bibitem{O}
O.~Ore, {\em Theory of Graphs}, American Mathematical Society, Providence, 1962.

%\bibitem{P}
%C.~Payan, Sur le nombre d'absorption d'un graphe simple, 
%{\em Cahiers Centre \'{E}tudes Recherche Op\'{e}r.} {\bf 17} (1975), 307--317.



\bibitem{PX}
C.~Payan,  and N. H. Xuong,    Domination-balanced graphs, 
{\em J. Graph Theory} {\bf 6} (1982), 23--32.


\bibitem{RE}
B. Reed,  Paths, stars and the number three, {\em  Combin. Probab. Comput.}  {\bf 5}  (1996),  277--295.

\bibitem{Thom1}
C.~Thomassen, Tilings of the torus and the Klein bottle and vertex-transitive graphs on a fixed surface, {\em Trans. Amer. Math. Soc.} {\bf 323} (1991), 605--635.


\bibitem{West}
D. B.~West, {\em Introduction to Graph Theory}, 2nd ed., Prentice Hall, New York, 2001.



\bibitem{WXZ}
D. R.~Wood, G. Xu and S. Zhou, Hadwiger's conjecture for 3-arc graphs,  {\em submitted}, 2013, \url{http://arxiv-web3.library.cornell.edu/abs/1310.4337}.


\bibitem{XZ}  G. Xu and S. Zhou,  Hamiltonicity of 3-arc graphs, {\em Graphs Combin.}, 2013, doi:10.1007/s00373-013-1329-5.  



\bibitem{Zhou00c}
S.~Zhou, Constructing a class of symmetric graphs, {\em European
J. Combin.} {\bf 23} (2002), 741--760.

%\bibitem{Zhou99}
%S.~Zhou, Imprimitive symmetric graphs, $3$-arc graphs and
%$1$-designs, {\em Discrete Math.} {\bf 244} (2002), 521--537.

\bibitem{Zhou98}
S.~Zhou, Almost covers of $2$-arc transitive graphs, {\em
Combinatorica} {\bf 24} (2004), 731-745. [Erratum: {\bf 27}
(2007), 745--746.]
\end{thebibliography}
\end{document}